\documentclass[reqno, 11pt]{amsart}
\usepackage{url}
\usepackage[margin=1in]{geometry}
\usepackage[hidelinks]{hyperref}
\usepackage{cleveref}
 \usepackage{relsize}
 \usepackage[dvipsnames]{xcolor}
\usepackage{tikz}
\usetikzlibrary{arrows.meta, positioning, decorations.markings}
\usepackage{tikz-cd}
\usepackage{pgfplots}
\usepgfplotslibrary{groupplots}
\pgfplotsset{compat=1.18}

\usetikzlibrary{3d, decorations.pathmorphing, fadings, backgrounds,decorations.pathreplacing}
\usepackage{pgfmath}
\usepackage{tikz-3dplot}
\usepackage{enumitem}
\usepackage{helvet}
\usepackage{color,soul}

\usepackage[style=numeric,maxbibnames=99]{biblatex}
\usepackage{caption} 
\usepackage{float}
\newtheorem{mainthm}{Theorem}

\Crefname{mainthm}{Theorem}{Mainthms}

\Crefname{maincor}{Corollary}{Maincors}

\Crefname{mainex}{Example}{Mainexs}
\newtheorem{theorem}{Theorem}[section]
\newtheorem{lemma}[theorem]{Lemma}
\newtheorem{prop}[theorem]{Proposition}
\Crefname{prop}{Proposition}{Propsitions}

\newtheorem{cor}[theorem]{Corollary}
\Crefname{cor}{Corollary}{corollarys}

\newtheorem{remark}[theorem]{Remark}

\theoremstyle{definition}
\newtheorem{definition}[theorem]{Definition}

\newtheoremstyle{break}
  {\topsep}{\topsep}%
  {\itshape}{}%
  {\bfseries}{}%
  {\newline}{}%
\theoremstyle{break}

\numberwithin{equation}{section}

\usepackage{bbm}

\usepackage{euscript}

\usepackage{pb-diagram}

\usepackage{amsmath}

\usepackage{amsxtra}
\usepackage{amssymb}
\usepackage{pifont}
\usepackage{amsbsy}

\usepackage{graphicx}
\usepackage{epstopdf}
\usepackage{colortbl}
\usepackage{multirow}
\usepackage{hhline}

\newskip\aline \newskip\halfaline
\newcommand{\bN}{{{\mathbb N}}}

\newcommand\bR{{\mathbb R}}

\newcommand\bS{{\mathbb S}}

\newcommand\bZ{{\mathbb Z}}

\DeclareMathOperator{\BiRic}{BiRic}

\newcommand{\eps}{{\epsilon}}

\def\inpr{\mathbin{\hbox to 6pt{\vrule height0.4pt width5pt depth0pt \kern-.4pt \vrule height6pt width0.4pt depth0pt\hss}}}

\newcommand{\Ric}{\mathrm{Ric}}
\newcommand{\cA}{\mathcal{A}}
\newcommand{\Lip}{\mathrm{Lip}}

\def\XXint#1#2#3{{\setbox0=\hbox{$#1{#2#3}{\int}$ }
\vcenter{\hbox{$#2#3$ }}\kern-.59\wd0}}

\title[Topology of tame contractible $n$-manifolds]{Topology of tame contractible $n$-manifolds, $n=6,7$, with Positive BiRicci Curvature and Ricci lower bounds}
\author{Paul Sweeney Jr.}
\address[Paul Sweeney Jr.]
{ Department of Mathematics, Michigan State University, 619 Red Cedar Road, C212 Wells Hall, East Lansing, MI 48824, USA}
\email{sween245@msu.edu}

\begin{document}
\begin{abstract}
    We show that if an open smooth $6$-manifold $M^6$ is the interior of a compact, contractible manifold with boundary $X^6$, such that $\pi_3(X,\partial X)=0$, and $M$ supports a complete metric with uniform positive BiRicci curvature and Ricci curvature bounded below, then $M$ is diffeomorphic to $\bR^6$. Additionally, we show that if an open smooth $n$-manifold $M^n$, $n\in\{6,7\}$, is the interior of a compact, contractible manifold with boundary $X^n$, such that $\pi_3(X,\partial X)=0$, and $M$ supports a complete metric with positive BiRicci curvature with at most $C$-quadratic decay at infinity for some $C>C_n$ and nonnegative Ricci curvature, then $M$ is diffeomorphic to $\bR^n$. Lastly, for each $n\geq6$, we construct a compact, contractible Newman $n$-manifold, $N^n$, based on a presentation of the Higman group such that $\pi_i(N,\partial N)=0$, $3\leq i \leq n-3$, and $\mathrm{int}(N)$ admits a complete metric with uniform positive scalar curvature and Ricci curvature bounded below.
\end{abstract}
\maketitle
\section{Introduction}

An emblematic theme of Riemannian geometry is that the topology of a smooth manifold is constrained when it admits a Riemannian metric satisfying a positive curvature condition. A well-known example of this phenomenon is the result of Cohn-Vossen \cite{CohnVossen} and Huber \cite{HuberOnsubharmonic}, which shows that the only open\footnote{An open manifold is a non-compact manifold without boundary.} 2-manifold supporting a {complete} Riemannian metric with positive curvature is $\bR^2$. Likewise in higher dimensions, we would like to investigate what curvature conditions determine $\bR^n$. There have been a litany of results, recently, about the topological rigidity of $\bR^n$ under the condition of positive scalar curvature (see \Cref{subsec: openandposcurv} below).

Our goal is to study tame\footnote{A manifold is called tame if it is homeomorphic to the interior of a compact manifold with boundary.}  {contractible}\footnote{A smooth $n$-manifold $M^n$ is contractible if it is homotopy equivalent to a point, equivalently if all the homotopy groups of $M$ are trivial.} manifolds and show a topological rigidity theorem for $\bR^n$, $n=6,7$ under positive BiRicci curvature and Ricci curvature lower bounds. In \cite{ShenYeOnStableMinimalBiRicci}, Shen--Ye introduced the concept of BiRicci curvature and studied its relationship with minimal hypersurfaces. For $n\in\{3,4,5\},$ Shen--Ye showed that closed, connected, two-sided stable minimal hypersurfaces of Riemannian $n$-manifolds with $\BiRic\geq n-2$ have a uniform diameter estimate. They further studied BiRicci curvature in \cite{ShenYeOnGeometryandTopologyBiRicci}.

\begin{definition}
   Given a Riemannian manifold $(M^n,g)$, $p\in M$, and orthonormal $e_1,e_2\in T_pM$, the BiRicci curvature at $p$ of $\{e_1,e_2\}$ is defined as 
   \[
   \BiRic_g(e_1,e_2):=\Ric_g(e_1,e_1)+\Ric_g(e_2,e_2)-\sec_g(e_1,e_2)
   \]
   and for $p\in M$, we define $\BiRic_g(p):=\inf\left\{\BiRic_g(e_i,e_j)|\{e_i,e_j\} \text{ are orthonormal in } T_pM\right\}.$
   
\end{definition}
\begin{remark}
    Let $(M^n,g)$  be a Riemannian $n$-manifold such that $\BiRic\geq \kappa$. Then $R_g\geq\frac{n(n-1)}{2n-3}\kappa.$ Moreover, when $n\in\{2,3\}$, $\BiRic_g(p)=\frac{1}{2}R_g(p)$.
\end{remark}

We now introduce the concept of positive BiRicci curvature with at most quadratic decay at infinity, which is a strengthening of positive BiRicci curvature on an open manifold. This definition is related to the definition of positive scalar curvature with at most quadratic decay as defined in \cite[Definition 1.2]{BalacheffMorenodeMoraSardàSabourauComplete}, \cite[Definition 1.3.5]{MorenodeMoraSardàThesis}, and \cite[Definition 1.1]{ChenOptimal}.

\begin{definition} \label{def: BiRic decay}
    Let $(M^n,g)$ be a complete Riemannian $n$-manifold. Fix a base point $x_0\in M$, and denote by $r_{x_0}(y)=d(x_0,y)$ the distance function to $x_0$. The BiRicci curvature of $M$ has \emph{at most $C$-quadratic decay at infinity} if there exists a constant $D_0>0$ such that for every $y\in M$ with $r_{x_0}(y)\geq D_0$,
    \[
    \BiRic(y)\geq \frac{C}{r_{x_0}(y)^2}.
    \]
\end{definition}

In \cite{ZhouZhu}, Zhou and Zhu study complete manifolds with uniform BiRicci curvature and nonnegative Ricci curvature. In particular, they show the optimal volume growth for complete Riemannian $n$-manifolds, $3\leq n\leq 7$, with nonnegative Ricci curvature and BiRicci curvature bounded from below by $n-2$ outside a compact set. In this paper, we are able to show a topological rigidity result for $\bR^n$, $n=6,7$, under similar curvature control. In particular, our main results are the following.

\begin{mainthm}\label{t: main}
    Let $M^6$ be the interior of a compact, contractible $6$-manifold with boundary $X^6$ such that $\pi_3(X,\partial X)=0$. If M supports a complete metric with $\BiRic\geq \kappa>0$ and $\Ric\geq -K$ for some $K\geq0$, then $M$ is diffeomorphic to $\bR^6$.
\end{mainthm}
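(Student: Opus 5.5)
The plan is to show that $M$ is simply connected at infinity and then invoke the topological classification of contractible manifolds. Let $X$ be the compact contractible manifold with $\mathrm{int}(X)=M$. By Lefschetz duality and contractibility, $\partial X$ is a homology $5$-sphere. If $\pi_1(\partial X)=1$, then $\partial X$ is a homotopy $5$-sphere. By the $h$-cobordism theorem, $X$ minus a small ball is an $h$-cobordism between $S^5$ and $\partial X$, so it is a product; hence $X\cong D^6$ and $M\cong\bR^6$. Thus the whole proof reduces to showing $\pi_1(\partial X)=1$, which for a tame end is equivalent to $M$ being simply connected at infinity. The hypothesis $\pi_3(X,\partial X)=0$ enters precisely here. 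By the long exact sequence of the pair and $\pi_i(X)=0$, we get $\pi_2(\partial X)\cong\pi_3(X,\partial X)=0$. So $\partial X$ is a homology $5$-sphere with $\pi_2=0$. Its universal cover then has $\pi_1=\pi_2=0$, and this controls which spheres represent $\pi_1$ classes.

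The geometric input is a filling-radius or "linking" argument in the spirit of Wang, Chodosh--Li and Chen for $\bR^3,\bR^4$. Suppose $\pi_1(\partial X)\neq1$, and take a noncontractible loop $\gamma$ in a collar $\partial X\times\{t\}$ far out. Since $\partial X$ has trivial $H_1$ but nontrivial $\pi_1$, we can pass to the universal cover $\widetilde{\partial X}$, or use the contractibility of $X$ to produce a nontrivial class. This should yield a closed $4$-cycle in a neighborhood of the end that links a far-away curve but cannot be filled near the end. Concretely, I would aim to construct an area-minimizing (or $\mu$-bubble) hypersurface $\Sigma^5$ in an annular region of the end, homologous to $\partial X\times\{t\}$, with boundary conditions forced by the topology. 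Then, following Shen--Ye and Zhou--Zhu, stability combined with $\BiRic\geq\kappa$ gives that $\Sigma$ has positive spectral Ricci-type or scalar curvature in a weighted sense. Iterating the stability (warped $\mu$-bubbles) down to a surface yields a diameter bound: every component of a $\mu$-bubble in a region of width $L\gg\kappa^{-1/2}$ has uniformly bounded diameter. The Ricci lower bound $\Ric\geq -K$ is used to convert diameter bounds into bounded geometry: volume comparison gives uniform control of balls, so small-diameter pieces are contractible within a uniform neighborhood (via Bishop--Gromov and a local contractibility or filling estimate).

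With these in hand, the argument runs as follows. Any loop far out in the end bounds a disk in $M$ because $M$ is contractible. I then cut the disk with the $\mu$-bubble barrier, so that the part of the disk crossing deep inside is replaced by pieces lying on bounded-diameter components of $\Sigma$. Using $\pi_2(\partial X)=0$ (needed to control the $2$-dimensional pieces) together with the uniform diameter bounds, I homotope the disk out to infinity, which shows $M$ is simply connected at infinity.

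The main obstacle will be the dimension-reduction step in dimension $6$. There I need that stable minimal or $\mu$-bubble hypersurfaces inherit enough curvature to bound the diameter of the slices that carry the $\pi_1$ information. BiRicci is exactly the condition that survives one step of stability for $n\le 5$ slices. To get down to loop-level control in a $6$-manifold, I expect to need the extra topological assumption $\pi_3(X,\partial X)=0$ to avoid handling $3$-dimensional homotopy that the curvature cannot control. I would try first to make that trade precise: use the curvature to control $\pi_1$ and $H$-level filling, and the topology to kill $\pi_2$ of the end.
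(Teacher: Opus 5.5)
Your reduction to $\pi_1(\partial X)=1$, the $h$-cobordism finish, and the observation $\pi_2(\partial X)\cong\pi_3(X,\partial X)=0$ all match the paper. The geometric core, however, has a genuine gap, and it aims at a conclusion that curvature cannot deliver.

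The mechanism you propose fails. You want uniform diameter bounds on $\mu$-bubble components, together with $\Ric\geq -K$ and Bishop--Gromov, to make small pieces contractible in a uniform neighborhood, so that disks can be pushed to infinity. But a Ricci lower bound gives no injectivity-radius or non-collapsing control, hence no local contractibility; think of $\bS^1\times\bR^5$ with a tiny circle factor. Likewise, a diameter bound on a component $\Sigma'$ says nothing about whether loops in $\Sigma'$ die near the end. A linking or filling argument cannot work either: $H_1(\partial X)=0$ makes $\pi_1(\partial X)$ perfect, so it is invisible to homological linking.

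What the stability computation actually yields is the following. Take a separating $\mu$-bubble $\Sigma^5$ in a sufficiently long collar of $\partial X$. Each component has positive spectral Ricci curvature with $\gamma=1+\eta\le 4/3$, so it has finite $\pi_1$ by the Antonelli--Xu spectral Bishop--Gromov theorem. No iteration down to surfaces is needed. This is also where $\Ric\geq -K$ genuinely enters: one needs $\gamma>\sqrt{n-1}/2=1$ for $n=5$, and the extra term $(\gamma-1)\Ric(\nu,\nu)$ must then be bounded below. A degree argument for the projection $\Sigma\to\partial X$ then shows only that $\pi_1(\partial X)$ is \emph{finite}.

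The missing idea is the purely topological upgrade from finite to trivial. This is where $\pi_2(\partial X)=0$ is really used, not to control two-dimensional pieces of a disk. Since $\pi_1(\partial X)$ is finite, the universal cover $\widetilde{\partial X}$ is a closed, simply connected $5$-manifold with $\pi_2=0$. Hurewicz gives $H_1=H_2=0$, and Poincar\'e duality with universal coefficients gives $H_4\cong H^1=0$ and $H_3\cong H^2=0$. So $\widetilde{\partial X}$ is a homotopy $5$-sphere, hence diffeomorphic to $\bS^5$. Sjerve's theorem then forces $\pi_1(\partial X)=1$: the only nontrivial integral homology $n$-sphere, $n\geq 3$, covered by $\bS^n$ is the Poincar\'e $3$-sphere.

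An Euler characteristic count, as in the $7$-dimensional case, cannot replace Sjerve's theorem here, because $\dim\partial X=5$ is odd. Without this step, the curvature input leaves open boundaries with nontrivial finite perfect fundamental group, so your plan as stated cannot close.
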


\begin{mainthm}\label{t: main2}
    Let $M^n$, $n\in\{6,7\}$, be the interior of a compact, contractible $n$-manifold with boundary $X^n$ such that $\pi_3(X,\partial X)=0$. If M supports a complete metric with positive BiRicci curvature with at most $C$-quadratic decay at infinity for some $C>C_n$, where $C_6=\frac{32}{21}$ and $C_7=\frac{25}{8}$ and $\Ric\geq0$, then $M$ is diffeomorphic to $\bR^n$.
\end{mainthm}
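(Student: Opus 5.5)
The plan is to show that $M$ is simply connected at infinity; the topology then finishes the argument. Write $M=\mathrm{int}(X)$ with $X$ compact and contractible. If $\pi_1(\partial X)=1$, then $\partial X$ is a homotopy $(n-1)$-sphere, by Poincaré–Lefschetz duality and the contractibility of $X$. Then $X$ is a contractible manifold with simply connected boundary, so for $n\ge 6$ the $h$-cobordism theorem (applied to $X$ minus a ball) gives that $X$ is diffeomorphic to $D^n$. Hence $M\cong\bR^n$. The hypothesis $\pi_3(X,\partial X)=0$ is used to reduce simple connectivity of $\partial X$ to a statement about $2$-dimensional filling. From the exact sequence of the pair and the contractibility of $X$ we get $\pi_k(\partial X)\cong\pi_{k+1}(X,\partial X)$. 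In particular $\pi_2(\partial X)=0$. By duality, the relevant obstruction to $\pi_1(\partial X)=1$ becomes the existence of loops in the end that bound disks in $M$ but not in a collar of infinity. I therefore aim to prove the following \emph{filling radius} statement: there is $L$ such that every loop $\gamma$ in $M\setminus B(x_0,R)$ bounds a disk in $M\setminus B(x_0,R-L)$. A standard argument converts this uniform control into $\pi_1^\infty(M)=1$, and hence $\pi_1(\partial X)=1$.

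To get the filling estimate, I follow the Chodosh–Li / Wang style $\mu$-bubble strategy adapted to BiRicci curvature. Take $\gamma$ far out, and let $\Sigma^{n-1}$ be a hypersurface separating $\gamma$ from $x_0$. Solve a $\mu$-bubble problem in an annulus of width comparable to $L$ around the level set of $r_{x_0}$. The bubble $\Sigma_1$ is stable, and its stability operator inherits positivity from BiRicci curvature. This is exactly the Shen–Ye mechanism: $\BiRic$ controls $\Ric_M(\nu,\nu)+$ the intrinsic Ricci in one direction. Iterating once more inside $\Sigma_1$ with a warped $\mu$-bubble of the form $\int_{\Sigma_1} u$-weighted area gives a $2$-dimensional-like slicing. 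The upshot is that each component of the slice has diameter bounded by a constant depending on $\kappa$ and $n$, and these constants are valid for $n\le 7$ by regularity. The Ricci lower bound $\Ric\ge -K$ enters through volume/Bishop–Gromov control, as in Zhou–Zhu. It gives uniform bounded geometry, so small-diameter pieces have loops that can be coned off inside a ball of controlled size.

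The filling then proceeds by decomposing $\gamma$ into short arcs lying in bounded-diameter pieces and coning each off locally. The $2$-skeleton filling is then pushed away from $x_0$; the vanishing of $\pi_2$ and $\pi_3$ of the pair is needed so that spherical pieces created in the process can be discarded.

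For \Cref{t: main2}, the same scheme applies with weight functions adapted to quadratic decay, as in Chen's and Balacheff–Moreno–Sabourau's work. The threshold $C_n$ is the constant at which the ODE comparison for the $\mu$-bubble warping function closes up in dimension $n$. With $\Ric\ge0$, we get relative volume comparison at every scale, so the diameter bound becomes $\lesssim \epsilon R$ at scale $R$, and filling in annuli $B(2R)\setminus B(R/2)$ still works.

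The main obstacle is the $\mu$-bubble step in dimension $6$ (and $7$): proving that BiRicci positivity survives two levels of slicing with a diameter bound. This requires a careful choice of the Shen–Ye weighted stability inequality with exponent tuned to $n$; the value of $C_n$ reflects exactly this. I would first try the one-step BiRicci bubble and then apply the intrinsic Ricci-type estimate of the stable hypersurface to bound the diameter directly.
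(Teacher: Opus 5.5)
\textbf{There is a genuine gap.} Your plan rests on the geometric filling statement: every loop in $M\setminus B(x_0,R)$ bounds a disk in $M\setminus B(x_0,R-L)$. The curvature hypotheses cannot deliver this. Your local step, that ``small-diameter pieces have loops that can be coned off inside a ball of controlled size,'' fails for two reasons.

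\begin{itemize}
\item A Ricci lower bound gives volume comparison. It gives no injectivity radius bound and no local contractibility, so it does not give bounded geometry.
\item More decisively, a short loop in a bounded-diameter slice need not bound a disk nearby. The round cylinder $\bS^{n-1}/\Gamma\times\bR$ over a spherical space form has $\BiRic\ge n-2$ and $\Ric\ge 0$. Its cross-sections are exactly the bounded-diameter pieces your slicing produces, yet loops generating $\Gamma$ bound no disk at all.
\end{itemize}

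Your argument is local, and it uses contractibility only in the step of ``discarding spherical pieces.'' There you invoke $\pi_2(X,\partial X)=0$, but $\pi_2(X,\partial X)\cong\pi_1(\partial X)$ is precisely what you are trying to kill, so that step is circular.

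What the one-step BiRicci $\mu$-bubble actually gives is weaker. Each component of a separating bubble in an exhaustion satisfies positivity of $-\gamma\Delta_\Sigma+\lambda_{\Ric_\Sigma}$ with $\gamma=\frac{n-2}{n-3}>1$. This is where $\Ric\ge 0$ is really used: it absorbs the term $(\gamma-1)\Ric(\nu,\nu)$, not bounded geometry. The Antonelli--Xu spectral Bishop--Gromov theorem then gives finite $\pi_1$ for each component. A degree-one projection of a separating component onto $\partial X$ is $\pi_1$-surjective, so $\pi_1(\partial X)$ is finite.

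The missing idea is the purely topological upgrade from finite to trivial, and this is where $\pi_3(X,\partial X)=0$ is essential. It gives $\pi_2(\partial X)=0$, so the universal cover $\widetilde{\partial X}$ is a closed orientable manifold with $\pi_1=\pi_2=0$. Hurewicz, Poincar\'e duality and universal coefficients then give $H_1=H_2=0$ for $\widetilde{\partial X}$.

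\begin{itemize}
\item In dimension $5$ they also give $H_3=H_4=0$. So $\widetilde{\partial X}$ is a homotopy sphere, hence $\bS^5$, and Sjerve's theorem forces $\pi_1(\partial X)=1$.
\item In dimension $6$ they give $H_4=H_5=0$. Writing $d=|\pi_1(\partial X)|$ and using $\chi(\partial X)=2$, one gets $2d=\chi(\widetilde{\partial X})=2-\mathrm{rank}\,H_3(\widetilde{\partial X})$, which forces $d=1$.
\end{itemize}

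Your endgame is correct: a simply connected homology-sphere boundary gives $X\cong D^n$ by the $h$-cobordism theorem for $n\ge 6$. But without the finiteness step followed by this topological argument, your proposal has no route to $\pi_1(\partial X)=1$.
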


We note that Navarro, Pan, and Zhu study complete manifolds with nonnegative Ricci curvature and give restrictions on the fundamental group under the additional conditions of linear volume growth or sublinear diameter growth \cite{NavarroPanZhu1, NavarroPanZhu2, NavarroPanZhu3}. (See also the work of Cucinotta--Magnabosco--Semola \cite{CucinottaMagnaboscoSemola}, Antonelli \cite{antonelli2026universalvolumegrowthbounds}, Ge \cite{ge2026heatkernelgeometrygromovs}, Koirala \cite{koirala2026volumegrowthpositiveintermediate}, and Kong--Zhu \cite{kong2026positivescalarcurvaturevolume}.) Additionally, Deng \cite[Theorem E]{DengConformallyFlat} recently showed that a sufficiently connected, complete, open, locally conformally flat, Riemannian $n$-manifold with nonnegative scalar curvature is $\bR^n$.

Furthermore, we are able to show the existence of the manifolds that satisfy the topological hypotheses of \Cref{t: main} and admit metrics of uniform positive scalar curvature and Ricci curvature bounded below. These manifolds are the so-called Newman manifolds \cite{NewmanBoundaries} constructed based on a presentation of the Higman group \cite{HigmanAFinitelyGenerated}. We note that recently Deng \cite[Theorem F]{DengConformallyFlat} constructed complete, open, contractible, locally
conformally flat $n$-manifolds, $n\geq 6$, with uniform positive scalar curvature that are not homeomorphic to $\bR^n$. Deng's construction is a Newman manifold based on a presentation of the binary icosahedral group.

\begin{mainthm}\label{t: example}
    For $n\geq 6$, there exists a compact, contractible $n$-manifold with boundary $N^n$ such that $\pi_2(N,\partial N)\neq 0$ and $\pi_i(N,\partial N)=0$, $3\leq i\leq n-3$. In particular, $M=\mathrm{int}(N)$ is not diffeomorphic to $\bR^n$. Moreover, $M$ admits a complete Riemannian metric with uniform positive scalar curvature and Ricci curvature bounded below.
\end{mainthm}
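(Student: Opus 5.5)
The plan is to realize $N^n$ as the complement of a regular neighborhood of the presentation $2$-complex of the Higman group in $S^n$. Then $\pi_1(\partial N)\cong H\neq 1$, and the asphericity of that presentation gives the vanishing of the relative homotopy groups. The metric will be a Gromov--Lawson positive scalar curvature metric on $\partial N$, extended by a product end. Concretely, let
\[
H=\langle a,b,c,d \mid a^{-1}bab^{-2},\ b^{-1}cbc^{-2},\ c^{-1}dcd^{-2},\ d^{-1}ada^{-2}\rangle
\]
be Higman's group. I will use three standard facts about this presentation:
\begin{itemize}
\item $H$ is nontrivial (indeed infinite);
\item the presentation is balanced and $H$ is superperfect, so the presentation $2$-complex $K$ (one vertex, four $1$-cells, four $2$-cells) is acyclic;
\item the presentation is aspherical, i.e.\ $K$ is a $K(H,1)$, so its universal cover is contractible.
\end{itemize}
Since $n\geq 6>2\cdot 2+1$, $K$ PL-embeds in $S^n$. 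Let $V$ be a regular neighborhood of $K$ and set $N:=S^n\setminus \mathrm{int}(V)$.

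\textbf{Topology.} By Alexander duality, $\tilde H_*(N)\cong \tilde H^{n-1-*}(K)=0$, so $N$ is acyclic. Since $K$ has codimension $n-2\geq 3$, general position gives $\pi_1(N)\cong\pi_1(S^n\setminus K)=1$. Hurewicz and Whitehead then show $N$ is contractible. For the same codimension reason, $\pi_1(\partial N)=\pi_1(\partial V)\cong \pi_1(V)=H$.

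Turning the handle decomposition of $V$ (handles of index $0,1,2$) upside down, $V$ is obtained from $\partial V$ by attaching handles of index $n-2,n-1,n$. Hence the inclusion $\partial V\hookrightarrow V\simeq K$ is $(n-3)$-connected, and $\pi_j(\partial N)\cong\pi_j(K)=0$ for $2\le j\le n-4$ by asphericity. Since $N$ is contractible, the long exact sequence of the pair gives $\pi_i(N,\partial N)\cong \pi_{i-1}(\partial N)$. Therefore $\pi_2(N,\partial N)\cong H\neq0$ and $\pi_i(N,\partial N)=0$ for $3\le i\le n-3$.

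Moreover, $M=\mathrm{int}(N)$ has an end with fundamental group $H\neq 1$, so $M$ is not simply connected at infinity. Consequently $M$ is not even homeomorphic to $\bR^n$.

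\textbf{Metric.} The cross-section $\partial N=\partial V$ is obtained from the round $S^{n-1}$ (the boundary of the $0$-handle) by surgeries along $0$- and $1$-spheres, namely the attaching spheres of the $1$- and $2$-handles. These surgeries have codimension $\geq n-2\geq 3$, so Gromov--Lawson gives a psc metric $h$ on $\partial N$. I then want a metric $g$ on $M\cong N\cup_{\partial N}\partial N\times[0,\infty)$ such that:
\begin{itemize}
\item $g=dt^2+h$ on the end;
\item $g$ has positive scalar curvature on the compact part $N$.
\end{itemize}
Such a $g$ is complete. It has uniform psc, since $R_g\geq\min(\min_N R_g,\min R_h)>0$. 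It also has $\Ric$ bounded below: on the end $\Ric_g$ is that of the compact $(\partial N,h)$ plus a flat factor, and $N$ is compact.

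\textbf{Main obstacle.} The key step is producing psc on $N$ that is a product near $\partial N$. I would do this directly on $S^n\setminus K$, following the Gromov--Lawson proof that removing a codimension $\geq3$ submanifold from a psc manifold leaves a complete psc metric with a "torpedo/cylindrical" end. Starting from the round metric, work skeleton by skeleton:
\begin{itemize}
\item first bend the metric in the normal directions around the vertex;
\item then do the same around the (thickened) $1$-cells;
\item then around the $2$-cells.
\end{itemize}
At each stage the normal sphere has dimension $\geq n-3\geq 3$. The curve in the $(r,t)$-plane can therefore be bent so that the scalar curvature contributed by the small normal spheres dominates, and the metric ends as an exact product $dt^2+h$ over $\partial V$.

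The delicate point is compatibility at the corners where a $1$-cell or $2$-cell handle meets the previously modified region. I would handle this as in the Gromov--Lawson surgery theorem, by making the earlier end exactly a product near the attaching region before performing the next bend, so each step is a surgery on a psc manifold that is a product near the surgery locus. Once this is done, the resulting metric has all the properties listed in the previous paragraph, completing the proof.
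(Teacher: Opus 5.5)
Your proposal is correct. It uses the same construction as the paper: $N=S^n\setminus\mathrm{int}(\overline U)$ for a regular neighborhood of the Higman presentation complex. The topological part is also the same: contractibility from general position, Alexander duality and Hurewicz--Whitehead, and the relative groups from the $(n-3)$-connectivity of $\partial\overline U\hookrightarrow\overline U$ together with asphericity of $K$ and the exact sequence of the pair. There are small differences. You get the connectivity by turning the handle decomposition upside down, where the paper cites \cite{GuilbaultSomeCompactContractible}. Your reduced Alexander duality in all degrees is slightly cleaner than the paper's detour through Poincar\'e--Lefschetz duality for $H_{n-1}(N)$. Your $\pi_1$-at-infinity argument also gives the stronger conclusion that $M$ is not even homeomorphic to $\bR^n$, which the paper leaves implicit.

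The real difference is the metric step. The paper constructs nothing by hand. It notes that $\partial\overline U\hookrightarrow\overline U$ is $2$-connected, so $\overline U$ is a handlebody with handles of codimension $\ge 3$. It then invokes \cite[Theorem 2.7 and Remark 2.8]{FrenckSpacesofPSC} (see also \cite[Theorem E]{EbertRandallWilliamsInfiniteLoopSpaces}) to get a psc metric on the complement $N$ that is a product near $\partial N$.

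Your skeleton-by-skeleton Gromov--Lawson bending, done relative to product collars, is essentially a hand proof of that cited statement. It shows transparently why codimension $\ge 3$ is what matters. Its cost is that the relative bending and the corner smoothing remain a sketch, whereas the citation handles them rigorously.

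Read your ``surgery'' steps as deleting tubular neighborhoods of properly embedded arcs and $2$-disks, which is the first half of the Gromov--Lawson construction. They are not attachments of low-index handles to the current manifold. That reading could not work here: $N$ has no handle decomposition with all indices $\le n-3$, since that would make $\partial N\hookrightarrow N$ $2$-connected, while $\pi_1(\partial N)=H\to\pi_1(N)=1$ is not injective.

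Finally, the separate Gromov--Lawson metric $h$ on $\partial N$ in your first metric paragraph is superfluous. Do not insist that the end be $dt^2+h$ for that prescribed $h$, since that would raise a concordance question you do not need. Let the boundary metric be whatever psc metric the bending produces.
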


\subsection{Closed Manifolds and Positive Curvature}  We recall some relevant results about closed manifolds under positive curvature conditions, where the landscape is considerably more developed. In dimension two, the Gauss--Bonnet theorem implies that a closed surface supporting a Riemannian metric with positive (scalar) curvature must be diffeomorphic to either the 2-sphere or the real projective plane. 

In dimension three, the foundational work by Schoen--Yau \cite{SchoenYauOntheStructure}, Gromov--Lawson \cite{GromovLawsontheclassification}, and Perelman \cite{PerelmanRicciFlowwithSurgery, PerelamanFiniteExtinction, PerelmanEntropy} establishes that a closed, orientable 3-manifold supports a Riemannian metric with positive scalar curvature if and only if it is diffeomorphic to a connected sum of the form
\[
\bS^3\#\left(\#_{j=1}^J \bS^3/\Gamma_j\right) \# \left(\#_{k=1}^K \bS^2 \times \bS^1\right),
\]
where each $\Gamma_j$ is a nontrivial finite subgroup of $\mathrm{SO}(4)$ acting freely on $\bS^3$.

In higher dimensions, there remain topological restrictions imposed by positive scalar curvature. Chodosh, Li, and Liokumovich \cite{ChodoshLiLiokumovich45} showed that if a closed 4-manifold $M$ satisfies $\pi_2(M) = 0$ and supports a Riemannian metric with positive scalar curvature, then there is a finite cover $\hat{M}$ of $M$ that is homotopy equivalent to either $\bS^4$ or a connected sum of finitely many copies of $\bS^3 \times \bS^1$. Similarly, in dimension five, Chodosh, Li, and Liokumovich \cite{ChodoshLiLiokumovich45} proved that if a closed manifold $M$ satisfies $\pi_2(M) =0$, $\pi_3(M) = 0$, and supports a Riemannian metric with positive scalar curvature, then there is a finite cover of $M$ that is homotopy equivalent to either $\bS^5$ or a connected sum of finitely many copies of $\bS^4 \times \bS^1$. (See also the work of Chen--Chu--Zhu \cite{ChenChuZhuPositivescalarcurvaturemetrics}.)

In \cite{MazurowskiWangYaoTopIntermediateCurv}, Mazurowski--Wang--Yao study closed manifolds which admit Riemannian metrics with positive $m$-intermediate curvature. They show, \cite[Theorem 7]{MazurowskiWangYaoTopIntermediateCurv}, that if $M^5$ is a closed manifold such that $\pi_2(M)=0$ and admits a metric with positive BiRicci curvature, then there is a finite cover $\hat{M}$ of $M$ that is homeomorphic to either $\bS^5$ or a connected sum of finitely many copies of $\bS^4 \times \bS^1$. They also showed that if $M^6$ is a closed manifold such that $\pi_2(M)=\pi_3(M)=\pi_4(M)=0$ and admits a metric with positive $4$-intermediate curvature, then there is a finite cover $\hat{M}$ of $M$ that is homeomorphic to either $\bS^6$ or a connected sum of finitely many copies of $\bS^5 \times \bS^1$.
% \begin{remark}
%     We note that \Cref{t: main} can be seen as an analogous result to \cite[Theorem 1]{ChodoshLiLiokumovich45} and \cite[Theorem 7]{MazurowskiWangYaoTopIntermediateCurv} for open $n$-manifolds, $n=6,7$.
% \end{remark}

\subsection{Open Manifolds and Positive Curvature}\label{subsec: openandposcurv}
To further contextualize the results, we recall some results about open manifolds and positive curvature. As mentioned above, Cohn-Vossen \cite{CohnVossen} and Huber \cite{HuberOnsubharmonic} showed that the only open 2-manifold supporting a {complete} Riemannian metric with positive curvature is $\bR^2$.

In dimension three, Gromov and Lawson showed that an open, orientable $3$-manifold with a finitely generated fundamental group supporting a complete metric with uniform positive scalar curvature is simply-connected at infinity \cite[Corollary 10.9]{GromovLawsonPositiveScalar}. In particular, this combined with the resolution of the Poincar\'e conjecture by Perelman in dimension three \cite{PerelmanRicciFlowwithSurgery, PerelamanFiniteExtinction, PerelmanEntropy} and a result of Edwards \cite{Edwards}, implies that an open, {contractible} 3-manifold with a complete Riemannian metric of uniform positive scalar curvature must be diffeomorphic to $\bR^3$. Chang, Weinberger, and Yu \cite{ChangWeinbergerYuTaming} were able to prove the same result using index theory. An application of Deng's recent result \cite[Theorem E]{DengConformallyFlat} implies that every complete, contractible, locally conformally flat 3-manifold with nonnegative scalar curvature is homeomorphic to $\bR^3$. Lastly, You and Zhang \cite[Theorem 1.1]{YouZhengTopRigidity} showed that a complete, contractible Riemannian 3-manifold with nonnegative scalar curvature is diffeomorphic to $\bR^3$. (See also the work of Chodosh--Lai--Xu \cite{ChodoshLaiXu3Manifolds} and Yan--Zhu \cite{YanZhuTopology3manifold}.)  We note that the condition of contractibility is necessary since the standard metric on the Riemannian product $\bS^2\times\bR$ has positive scalar curvature.

These results are related to the well-known open problem of Yau's \cite[Problem 27]{YauProblem} to classify 3-manifolds supporting complete Riemannian metrics with positive scalar curvature. Gromov \cite[\S 3.10.2]{GromovFourLectures} and Wang \cite{WanguniformlyPSC3}, independently, were able to make progress on this problem by showing an open, orientable 3-manifold supports a complete Riemannian metric with {uniform} positive scalar curvature if and only if it is an infinite connected sum of manifolds of the form $\bS^3/\Gamma_j$ and $\bS^2 \times \bS^1$, where $\Gamma_j$ is a finite subgroup of $\mathrm{SO}(4)$ acting freely on $\bS^3$. (See also Bessi\`eres--Besson--Maillot \cite{BessieresBessoMaillotRicciflow}, Bessi\`eres--Besson--Maillot--Marques \cite{BessieresBessoMaillotMarquesDeforming}, and Dong \cite{DongThreemanifolds}.) More recently, Chen \cite{ChenOptimal} showed that one can weaken the uniform positive scalar curvature hypothesis in Gromov's and Wang's result to positive scalar curvature with at most quadratic decay at infinity with constant $C > \frac{2}{3}$ and still have the same topological classification (see also Balacheff--Gil Moreno de Mora Sard\`a--Sabourau \cite{BalacheffMorenodeMoraSardàSabourauComplete}).

In dimension four, Chodosh, Máximo, and Mukherjee \cite{ChodoshMaximoMukherjeeComplete4} proved that if $M$ is a tame contractible 4-manifold and $M$ supports a complete Riemannian metric of uniform positive scalar curvature, then $M$ is homeomorphic to $\bR^4$. Moreover, if $M$ is the interior of a Mazur manifold\footnote{A Mazur manifold is a compact, contractible smooth 4-manifold with boundary admitting a handlebody decomposition with one 0-handle, one 1-handle, and one 2-handle.}, then the homeomorphism can be promoted to a diffeomorphism. In dimension five, the author \cite[Theorem A]{SweeneyPosCurv} showed that if $M$ is the interior of a compact, contractible 5-manifold with boundary $X$, such that $\pi_3(X, \partial X) = 0$ and $M$ supports a complete Riemannian metric of uniform positive scalar curvature, then $M$ is {diffeomorphic} to $\bR^5$. By the work of Chen \cite{ChenOptimal}, the uniform positive scalar curvature condition can be relaxed to positive scalar curvature with at most $C$-quadratic decay at infinity for some $C>\frac{n-1}{n}$ for both of these results. (See also the construction
criteria of Das \cite{DasConstructions} and the work of Wang--Zhu \cite{WangZhuQuadraticScalar47, 2WangZhuQuadraticScalar47}.) Finally, recall the recent work of Deng \cite[Theorem E]{DengConformallyFlat} mentioned earlier, which shows the following. Let $(M^n,g)$ be an open, simply connected Riemannian $n$-manifold, $n\geq 4$, supporting a complete, locally conformally flat metric with nonnegative scalar curvature, and let $\Phi: M\to\bS^n$ be the developing map. If $M$ is $\left\lfloor\frac{n-2}{2}\right\rfloor$-connected at infinity and $\bS^n\setminus\Phi(M)$ satisfies the small loops condition in $\bS^n$ then $M$ is homeomorphic to $\bR^n$.

\bigskip
The paper is organized as follows. In \Cref{prelims}, we collect important propositions from algebraic topology and Riemannian geometry, especially results about $\mu$-bubbles in complete manifolds with uniform positive BiRicci curvature and a lower bound on Ricci curvature, and in complete manifolds with positive BiRicci curvature with at most $C$-quadratic decay at infinity and nonnegative Ricci curvature. In \Cref{proofs}, we combine these tools to prove \Cref{t: main} and \Cref{t: main2}. The examples described in \Cref{t: example} are constructed in \Cref{s: examples proof}.

\section*{Acknowledgments}
 The author would also like to thank Demetre Kazaras for his many helpful conversations. The author would like to thank Jialong Deng for his helpful comments on their recent work on locally conformally flat manifolds and scalar curvature \cite{DengConformallyFlat} and its relation to this manuscript.

\section{Preliminaries}\label{prelims}

In this section, we collect relevant propositions from Riemannian geometry and topology that will be used to prove our main theorems.
\subsection{Algebraic Topology} Here, we recall some of the topological constraints on the boundaries of contractible manifolds and integral homology spheres, along with other useful algebraic topological facts. 

 \begin{prop}\label{homologysphere}
    Let $X^{n+1}$ be a compact, contractible $(n+1)$-manifold with boundary. Then $\partial X$ is an integral homology sphere, namely, $H_*(\partial X;\bZ)=H_*(\bS^n;\bZ)$.
\end{prop}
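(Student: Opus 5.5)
The plan is to combine Lefschetz duality with the long exact sequence of the pair $(X,\partial X)$. First I will check that we may work with integer coefficients without orientability issues. Since $X$ is contractible, it is simply connected, so $X$ is orientable. Then $\partial X$ is a closed orientable $n$-manifold with $\partial X$ oriented compatibly. Lefschetz duality then gives
\[
H_k(X,\partial X;\bZ)\cong H^{n+1-k}(X;\bZ)
\]
for all $k$. Because $X$ is contractible, $H^{j}(X;\bZ)=0$ for $j\neq 0$ and $H^0(X;\bZ)=\bZ$. Hence $H_k(X,\partial X;\bZ)=0$ for $k\neq n+1$ and $H_{n+1}(X,\partial X;\bZ)\cong\bZ$.

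Next I will use the long exact sequence of the pair,
\[
\cdots\to H_{k+1}(X)\to H_{k+1}(X,\partial X)\to H_k(\partial X)\to H_k(X)\to H_k(X,\partial X)\to\cdots.
\]
Contractibility gives $\tilde H_*(X)=0$, so the connecting map $H_{k+1}(X,\partial X)\to \tilde H_k(\partial X)$ is an isomorphism for all $k$. It is cleanest to use reduced homology throughout, which handles the degree-zero end of the sequence automatically. Combining this with the first step gives $\tilde H_k(\partial X;\bZ)=0$ for $k\neq n$ and $\tilde H_n(\partial X;\bZ)\cong\bZ$.

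Finally, $\partial X$ is nonempty. Otherwise $X$ would be a closed $(n+1)$-manifold, and by the same duality its top homology would be $\bZ\neq 0$, contradicting contractibility. Thus $H_0(\partial X)=\bZ$, and the computation above yields $H_*(\partial X;\bZ)\cong H_*(\bS^n;\bZ)$.

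The only delicate point is justifying Lefschetz duality over $\bZ$, which needs orientability. Simple connectivity of $X$ settles this, so I expect no real obstacle. If one preferred to avoid the orientability check, one could run the same argument with $\bZ/2$ coefficients, but that would not give the integral statement. The orientability route is therefore the one to use.
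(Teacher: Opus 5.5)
Your proposal is correct and follows essentially the same route as the paper: Poincar\'e--Lefschetz duality computes $H_*(X,\partial X;\bZ)$ from the cohomology of the contractible $X$, and the long exact sequence of the pair then transfers this to $\partial X$. Your use of reduced homology, together with your explicit checks of orientability (via simple connectivity) and of $\partial X\neq\emptyset$, makes explicit points the paper leaves implicit; the paper instead treats $H_0(\partial X)$ separately, using $H_0(X,\partial X)\cong H^{n+1}(X)=0$ and $H_1(X,\partial X)\cong H^{n}(X)=0$.
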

\begin{proof}
    As $X^{n+1}$ is contractible, $\pi_i(X)=0$, $H_i(X;\bZ)=0,$ and $H^i(X;\bZ)=0$ for all $i\geq 1$ and $H_0(X;\bZ)=\bZ$ and $H^0(X;\bZ)=\bZ$. Recall that the long exact sequence for the integral homology of $(X,\partial X)$ states
    \[
\cdots \to H_{i+1}(\partial X;\bZ) \to H_{i+1}(X;\bZ) \to H_{i+1}(X, \partial X;\bZ) \to H_{i}(\partial X;\bZ) \to H_{i}(X;\bZ) \to \cdots.
    \]
    On the other hand, Poincaré--Lefschetz duality says $H_{i+1}(X, \partial X;\bZ) \cong H^{n- i}(X;\bZ).$ For $1\leq i\leq n$, the long exact sequence becomes
    \[
    0\to H_{i+1}(X, \partial X;\bZ) \to H_{i}(\partial X;\bZ) \to0.
    \]
    Therefore, $H_{i}(\partial X;\bZ)\cong H_{i+1}(X, \partial X;\bZ)\cong H^{n - i}(X;\bZ)\cong 0$ for $1\leq i\leq n-1$ and  $H_{n}(\partial X;\bZ)\cong H_{n+1}(X, \partial X;\bZ)\cong H^{0}(X;\bZ)\cong \bZ$. 

    Since $X$ is connected, we know from Poincar\'e-Lefschetz duality that
    \[
    H_0(X,\partial X; \bZ)\cong H^{n+1}(X;\bZ)=0 \quad \text{  and  } \quad H_1(X,\partial X;\bZ)\cong H^n(X;\bZ) =0; 
    \]
    therefore,
    \[
    0 \to H_{0}(\partial X;\bZ) \to H_{0}(X;\bZ)\to 0.
    \]
    We see then that $H_{0}(\partial X;\bZ) \cong H_{0}(X;\bZ)\cong \bZ$.
\end{proof}

Next, we record a theorem of Sjerve \cite{SjerveHomologySpheres} on integral homology spheres that are covered by the sphere.
\begin{theorem}\label{coversofhomologyspheres}
    If $M^n$, $n\geq 3$, is an integral homology $n$-sphere which is covered by $\bS^n$, then either $\pi_1(M)=0$ or $n=3$ and $\pi_1(M)$ is the binary icosahedral group.
\end{theorem}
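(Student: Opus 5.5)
The plan is to exploit that a covering $\bS^n\to M$ has a finite deck group $G=\pi_1(M)$ acting freely on $\bS^n$, and to play the algebraic constraints this forces off against the requirement that $M$ be an integral homology sphere.

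First, since $\bS^n$ is compact and simply connected for $n\ge 2$, it is the universal cover of $M$. Hence $G=\pi_1(M)$ is finite of order $d$, and $\chi(\bS^n)=d\,\chi(M)$.

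Second, I treat even $n$. There $\chi(\bS^n)=2$. Since $M$ is a homology sphere, $\chi(M)=2$ as well, so $d=1$ and $\pi_1(M)=0$.

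Third, I treat odd $n$, where $G$ acts freely on $\bS^n$. I would invoke the Artin--Tate/Swan periodicity theorem: a finite group acting freely on $\bS^n$ has periodic cohomology, of period dividing $n+1$. Concretely, the orbit space is a finite-dimensional $K(G,1)$-approximation whose cellular chain complex gives a periodic resolution. The homology-sphere hypothesis says $H_1(M)=G^{ab}=0$, so $G$ is perfect.

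The key step is the classification of perfect groups with periodic cohomology. By the Suzuki--Zassenhaus classification of finite groups with periodic cohomology, the only nontrivial perfect one is $\mathrm{SL}(2,5)$, the binary icosahedral group $I^*$, which has period $4$. So if $G\ne 1$, then $G\cong I^*$ and $4\mid n+1$. To exclude $n\ge 7$ (with $n\equiv 3 \bmod 4$), I would compute $H_*(M)$ via the Cartan--Leray spectral sequence, or more directly: $M$ is $(n)$-dimensional with $\pi_i(M)=0$ for $2\le i\le n-1$. So $M$ is an $n$-skeleton of $BG$, giving $H_i(M)\cong H_i(G;\bZ)$ for $i\le n-1$. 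Periodicity of $I^*$ with period 4 gives $H_3(I^*;\bZ)\cong \bZ/120\ne 0$, and more generally $H_{4k+3}(I^*)\cong\bZ/120$. For $n\ge 7$ we have $3\le n-1$, so $H_3(M)\cong\bZ/120$, which contradicts the homology-sphere hypothesis. When $n=3$ no contradiction arises, and the Poincaré homology sphere realizes this case.

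The main obstacle is the group theory: identifying $I^*$ as the unique nontrivial perfect group with periodic cohomology. I would simply cite Suzuki--Zassenhaus/Milnor for this (indeed the statement itself is a theorem of Sjerve, so in the paper it is cited rather than proved). The homology computation $H_3(BI^*)=\bZ/120$ is standard.
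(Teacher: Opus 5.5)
\textbf{Gap: the classification of perfect periodic groups is misquoted.} Your reduction is sound up to the group theory. The Euler characteristic rules out even $n$. For odd $n$, the deck group $G=\pi_1(M)$ has periodic cohomology with period dividing $n+1$, it is perfect because $H_1(M)=0$, and $H_i(M)\cong H_i(G;\bZ)$ for $i\le n-1$.

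The failing step is the claim that $\mathrm{SL}(2,5)$ is the only nontrivial perfect group with periodic cohomology. That is false. The Suzuki--Zassenhaus list contains $\mathrm{SL}(2,p)$ for every odd prime $p$, and these groups are perfect for all $p\ge 5$. For example, $\mathrm{SL}(2,7)$ has order $336$ and Sylow subgroups $Q_{16}$, $\bZ/3$, $\bZ/7$, so it is periodic and perfect.

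Uniqueness of $I^*$ among perfect groups holds only for the Zassenhaus--Wolf list of groups acting freely and \emph{orthogonally} on spheres. Nothing in the hypotheses makes the deck action linear. Nor are these groups excluded merely because $G$ must act freely on a sphere. $\mathrm{SL}(2,p)$ has a unique, central involution, and all its subgroups of order $q^2$ or $2q$ are cyclic. So by Madsen--Thomas--Wall it does act freely on spheres of suitable odd dimension. As written, your argument therefore never eliminates $G\cong \mathrm{SL}(2,p)$, $p\ge 7$, in any odd dimension. Your identification $G\cong I^*$ when $n=3$ also rests on the false uniqueness claim.

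\textbf{Repair within your framework.} Use the correct statement: a nontrivial perfect periodic group is $\mathrm{SL}(2,p)$ for some prime $p\ge 5$. Then two steps finish the proof.

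(i) Show $H_3(G;\bZ)\neq 0$ for every such $p$. Since $3\mid p^2-1$, a Sylow $3$-subgroup $P_3$ is cyclic and lies in a split or nonsplit torus $T=C_G(P_3)$. Moreover $N_G(P_3)/C_G(P_3)\cong\bZ/2$, acting by inversion. By Swan's theorem the $3$-period is $4$, so $H_3(G;\bZ)\cong H^4(G;\bZ)$ has nonzero $3$-primary part. Your isomorphism $H_3(M)\cong H_3(G;\bZ)$ holds once $n\ge 4$, so every odd $n\ge 5$ is excluded, not just the case $G\cong I^*$.

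(ii) For $n=3$, the period of $G$ must divide $4$. The $p$-period of $\mathrm{SL}(2,p)$ is $2\,|N_G(P)/C_G(P)|=p-1$, because the diagonal torus acts on the unipotent Sylow $p$-subgroup through the squares in $\mathbb{F}_p^{*}$. This forces $p=5$. Alternatively, invoke elliptization to make the action orthogonal and then use the linear classification.

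The paper itself gives no proof of this statement; it simply quotes Sjerve.
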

This means the Poincar\'{e} homology 3-sphere (which is $\bS^3/\Gamma$ where $\Gamma$ is the binary icosahedral group) is the only non-trivial integral homology $n$-sphere that is covered by the sphere.
\begin{lemma}\label{boundaryconnectedness}
    Let $X^{n+1}$ be a compact, contractible $(n+1)$-manifold with boundary. Then for $i\geq 1$, $\pi_{i+1}(X,\partial X)\cong\pi_i(\partial X).$
\end{lemma}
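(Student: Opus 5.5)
The plan is to read the isomorphism off the long exact sequence of homotopy groups of the pair $(X,\partial X)$. For $i\geq 1$ this sequence reads
\[
\cdots \to \pi_{i+1}(X) \to \pi_{i+1}(X,\partial X) \xrightarrow{\ \partial\ } \pi_i(\partial X) \to \pi_i(X) \to \cdots .
\]
Since $X$ is contractible, $\pi_k(X)=0$ for all $k\geq 0$. The two outer terms $\pi_{i+1}(X)$ and $\pi_i(X)$ therefore vanish for every $i\geq 1$. Exactness then makes the boundary map $\partial\colon \pi_{i+1}(X,\partial X)\to\pi_i(\partial X)$ both injective and surjective, so it is an isomorphism.

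Two points need a little care, and this bookkeeping is the only real obstacle.

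\emph{Basepoints.} First, $\partial X$ must be path-connected, so that $\pi_i(\partial X)$ and the relative groups $\pi_{i+1}(X,\partial X)$ do not depend on the choice of basepoint in $\partial X$. This follows from \Cref{homologysphere}: there $H_0(\partial X;\bZ)\cong\bZ$, which is exactly path-connectedness of $\partial X$. (The case $n=0$ would give a disconnected boundary, but then the statement is vacuous or degenerate. The paper is only concerned with $n\geq 4$.) We fix a basepoint in $\partial X$ and work with it throughout.

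\emph{Low degree.} Second, at $i=1$ the relative group $\pi_2(X,\partial X)$ is a group and $\pi_1(\partial X)$ may be nonabelian. Since $\partial$ is a group homomorphism in this degree, the exactness argument still gives a group isomorphism. So nothing beyond the standard long exact sequence is needed.

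If one prefers to avoid relative homotopy groups entirely, an equivalent route is to replace the inclusion $\partial X\hookrightarrow X$ by a fibration with homotopy fiber $F$. Then $\pi_{i+1}(X,\partial X)\cong\pi_i(F)$, and $F\simeq\partial X$ because the base $X$ is contractible. The direct long-exact-sequence argument above is shorter, and it is the one I would write.
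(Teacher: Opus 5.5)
Your proposal is correct and is the paper's argument: in the long exact sequence of the pair $(X,\partial X)$, contractibility kills $\pi_{i+1}(X)$ and $\pi_i(X)$, so the boundary map $\pi_{i+1}(X,\partial X)\to\pi_i(\partial X)$ is an isomorphism. Your basepoint and $i=1$ remarks are extra care that the paper omits. The argument in fact works at every basepoint of $\partial X$, so you do not even need connectedness of $\partial X$.
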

\begin{proof}
    As $X$ is contractible, we have for $i\geq 1$ that $\pi_i(X)=0$. Consider the long exact sequence of homotopy pairs $(X,\partial X)$.
    \[
        \cdots \to \pi_{i+1}(\partial X) \to \pi_{i+1}(X) \to \pi_{i+1}(X, \partial X) \to \pi_{i}(\partial X) \to \pi_{i}(X) \to \cdots.
    \]
  In particular, for $i\geq 1 $, $0 \to \pi_{i+1}(X,\partial X) \to \pi_{i}(\partial X) \to 0$. Therefore, $\pi_{i+1}(X,\partial X) \cong  \pi_{i}(\partial X)$.
\end{proof}

\begin{prop}\label{p: finitefundgroup}
    Let $M$ and $N$ be smooth, oriented, closed, connected, $n$-manifolds. If $f:M\to N$ is a smooth map of $\deg f =1$, then the induced homomorphism $f_*:\pi_1(M)\to \pi_1(N)$ is surjective.
\end{prop}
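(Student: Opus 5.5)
We argue via covering spaces. Let $H=f_*(\pi_1(M))\le \pi_1(N)$ and let $p:\tilde N\to N$ be the connected covering space corresponding to $H$. Its number of sheets is the index $d=[\pi_1(N):H]$, which a priori may be infinite. Since $f_*(\pi_1(M))\subseteq p_*(\pi_1(\tilde N))=H$ and $M$ is connected (and locally path connected), the lifting criterion gives a lift $\tilde f:M\to\tilde N$ with $p\circ\tilde f=f$. We may take $\tilde f$ smooth, since $p$ is a local diffeomorphism for the pulled-back smooth structure. The goal is to show $d=1$, which is exactly surjectivity of $f_*$.

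First suppose $d=\infty$. Then $\tilde N$ is a noncompact connected $n$-manifold, so $H_n(\tilde N;\bZ)=0$. The map $f_*=p_*\circ\tilde f_*$ on $H_n(\,\cdot\,;\bZ)$ then factors through zero. Hence $f_*[M]=0$, contradicting $f_*[M]=\deg f\cdot[N]=[N]\neq 0$.

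So $d<\infty$, and $\tilde N$ is a closed connected $n$-manifold. It is oriented by pulling back the orientation of $N$, so $p$ is orientation preserving and $\deg p=d$. Indeed, a regular value $y$ has $d$ preimages, each contributing $+1$ since $p$ is a local diffeomorphism. Multiplicativity of degree then gives
\[
1=\deg f=\deg(p\circ\tilde f)=\deg p\cdot \deg \tilde f = d\cdot\deg\tilde f .
\]
Here $\deg\tilde f\in\bZ$, so $d$ divides $1$, and therefore $d=1$. Thus $H=\pi_1(N)$ and $f_*$ is surjective.

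The main point requiring care is the infinite-index case: we need that the top integral homology of a noncompact connected manifold vanishes. This is standard, for example Hatcher Prop.~3.29. Equivalently, one can note that the degree of $f$ can be computed by counting preimages of a regular value, and the same contradiction arises. The remaining steps, namely multiplicativity of degree and orientability of the cover, are routine.
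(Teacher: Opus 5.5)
Your argument is correct and is essentially the paper's proof: lift $f$ through the covering $p$ corresponding to $f_*(\pi_1(M))$, rule out an infinite-sheeted (noncompact) cover because the top-degree (co)homology of a connected noncompact manifold vanishes, and in the finite case use $1=\deg p\cdot\deg\tilde f$ with $\deg p$ equal to the number of sheets. The only difference is cosmetic: for the noncompact case the paper uses de Rham cohomology ($p^*\omega$ is exact, so $\int_M \tilde f^*(p^*\omega)=0$ by Stokes) where you use $H_n(\tilde N;\bZ)=0$, and you make explicit the identification $\deg p=d$ that the paper leaves implicit.
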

\begin{proof}
Consider the covering map $p:\hat{N}\to N$ corresponding to the subgroup $f_*(\pi_1(M))\subseteq \pi_1(N)$. As $f_*(\pi_1(M))\subseteq p_*(\pi_1(\hat{N}))=f_*(\pi_1(M))$, we lift $f$ to a map $\hat{f}:M\to \hat{N}$ such that $p\circ \hat{f} = f$. We will consider two cases.

 \noindent\underline{{\textsc{Case 1}}}: Assume $\hat{N}$ is compact, then $\deg p$ and $\deg\hat{f}$ are integers and $1=\deg f = (\deg p)\cdot (\deg\hat{f})$. Thus, $f_*(\pi_1(M))=p_*(\pi_1(\hat{N}))=\pi_1(N).$

 \noindent\underline{{\textsc{Case 2}}}: Assume $\hat{N}$ is noncompact. Choose a volume form $\omega$ on $N$ and note 
 \[
(\deg f)\int_N\omega= \int_M f^*\omega = \int_M \left(p\circ\hat{f}\right)^*\omega = \int_M\hat{f}^*(p^*\omega) =0,
 \]
 where the last equality follows from Stokes' Theorem since $\hat{N}$ is connected and non compact and so $p^*\omega$ is exact. This implies $\deg f =0$, which is a contradiction.
\end{proof}

We have the immediate corollary.
\begin{cor}\label{fintepi1}
    Let $M$ and $N$ be smooth, oriented, closed, connected $n$-manifolds and assume $\pi_1(M)$ is finite. If $f:M\to N$ is a smooth map of $\deg f =1$, then $\pi_1(N)$ is finite.
\end{cor}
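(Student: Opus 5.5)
This follows directly from \Cref{p: finitefundgroup}. Since $f:M\to N$ is a smooth map of degree one between smooth, oriented, closed, connected $n$-manifolds, \Cref{p: finitefundgroup} gives that $f_*:\pi_1(M)\to\pi_1(N)$ is surjective.

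Surjectivity means $\pi_1(N)=f_*(\pi_1(M))$, so $\pi_1(N)$ is a homomorphic image of the finite group $\pi_1(M)$. By the first isomorphism theorem, $\pi_1(N)\cong \pi_1(M)/\ker f_*$. In particular $|\pi_1(N)|\le|\pi_1(M)|<\infty$, and hence $\pi_1(N)$ is finite.

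Basepoints cause no difficulty: both manifolds are connected, so the fundamental groups are well defined up to isomorphism. Choosing the basepoint of $N$ to be the image of the basepoint of $M$ makes $f_*$ a genuine homomorphism, which is how \Cref{p: finitefundgroup} is stated.

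Essentially all the content lies in the covering-space and degree argument already carried out in \Cref{p: finitefundgroup}. The corollary itself is purely group-theoretic, and no step here is a real obstacle.
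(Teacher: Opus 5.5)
Your proposal is correct and follows the same route as the paper. The paper states this corollary as an immediate consequence of the preceding proposition: surjectivity of $f_*$ makes $\pi_1(N)$ a quotient of the finite group $\pi_1(M)$, so $\pi_1(N)$ is finite.
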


\subsection{Riemannian Geometry}\label{RiemSubsection}
This section provides a brief overview of BiRicci curvature and $\mu$-bubbles. We collect some facts about $\mu$-bubbles in manifolds with certain positive BiRicci curvature conditions and certain lower bounds on Ricci curvature. For a more comprehensive treatment of $\mu$-bubbles, we refer the reader to Chodosh–Li \cite{ChodoshLiGeneralizedsoap}, Zhu \cite{ZhuWidth}, and Gromov \cite{GromovFourLectures}.

Let $(X^{n+1},g)$, $2\leq n\leq 6$, be a compact Riemannian $(n+1)$-manifold with boundary. Assume that $\partial X \neq \emptyset$ and that $\partial X$ has at least two boundary components. Now let $\partial X = \partial_-X \sqcup \partial_+X$ where both $\partial_+ X$ and $\partial_-X$ are nonempty and both $\partial_+ X$ and $\partial_-X$ are unions of connected components of the boundary. Now, fix a function $h\in C^\infty(\mathrm{int}(X))$ such that $h\to + \infty$ on $\partial_{+} X$ and $h\to -\infty$ on $\partial_{-} X$. Now, choose an open Caccioppoli set $\Omega_0$ with smooth boundary $\partial \Omega_0 \subset \mathrm{int}(X)$, $\partial_+X\subset \Omega_0$, and $\overline{\Omega}_0\cap \partial_-X=\emptyset$. Consider the following functional:

\begin{equation}\label{e: mububble}
    \mathcal{A}(\Omega) = \int_{\partial^* \Omega} d\mathcal{H}^{n} - \int_X \left(\chi_{_\Omega} - \chi_{_{\Omega_0}} \right)h\, d\mathcal{H}^{n+1}
\end{equation}
for all $\Omega \in \mathcal{C}:= \{\text{Caccioppoli sets } \Omega\subset X \text{ with } \Omega \triangle \Omega_0 \Subset \mathrm{int}(X)\}$. See \Cref{2d}.
\begin{figure}[H]
\begin{center}
\begin{tikzpicture}

    % Outer and inner circles (off-centered)
    \draw[thick, black] (-0.5,0) circle (2.5);
    \draw[thick, black] (0,0) circle (1);

    % Define the blue wiggly closed curve that fully encloses the inner circle
    \draw[thick, blue, smooth, samples=100, domain=0:360]
        plot ({1.5*cos(\x) + 0.1*sin(3*\x)}, {1.4*sin(\x) + 0.3*cos(2*\x)});

    % Label the blue curve
    \node at (0.8,-1.8) {\tiny \(\textcolor{blue}{\Omega_0}\)};

    % Shade the region inside the blue curve
   \fill[blue!30, opacity=0.6] 
        plot[smooth cycle, samples=100, domain=0:360]
        ({1.5*cos(\x) + 0.1*sin(3*\x)}, {1.4*sin(\x) + 0.3*cos(2*\x)});

    % Define the red wiggly closed curve inside the outer circle
    \draw[thick, red, smooth, samples=100, domain=0:360]
        plot ({1.7*cos(\x) + 0.1*sin(5*\x)}, {1.6*sin(\x) - 0.25*cos(3*\x)});
    % Shade the region inside the blue curve
   \fill[red!30, opacity=0.6] 
        plot[smooth cycle, samples=100, domain=0:360]
        ({1.7*cos(\x) + 0.1*sin(5*\x)}, {1.6*sin(\x) - 0.25*cos(3*\x)});
    % Label the red curve
    \node at (-1.9,0.5) {\tiny \(\textcolor{red}{\Omega}\)};

    %Shade region inside inner circle white
    \fill[white]  plot[smooth cycle, samples=100, domain=0:360] (0,0) circle (1);
    % Labels for the circles
    \node at (0,0.75) {\tiny \(\partial_+ X\)};
    \node at (-3.5,0) {\tiny \(\partial_- X\)};
\draw[thick, black] (0,0) circle (1);
\end{tikzpicture}
\end{center}
\caption{Schematic picture of a $\mu$-bubble} \label{2d}
\end{figure}

The existence and regularity of a minimizer of $\cA$ among all Caccioppoli
sets is claimed by Gromov \cite[Section 5.2]{GromovFourLectures} and rigorously carried out by Zhu \cite[Proposition 2.1]{ZhuWidth} (cf. \cite[Proposition 12]{ChodoshLiGeneralizedsoap}).

\begin{prop}\label{p: existence}
    For $2\leq n\leq 6$, there exists a smooth $(n+1)$-manifold $\Omega^{n+1}$ which minimizes $\cA$ on $\mathcal{C}$.
\end{prop}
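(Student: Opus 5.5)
The plan is the standard direct method in the calculus of variations, followed by interior regularity for almost-minimizing boundaries. Fix a minimizing sequence $\Omega_k\in\mathcal{C}$ with $\cA(\Omega_k)\to\inf_{\mathcal C}\cA$. The first task is to show $\inf_{\mathcal C}\cA>-\infty$ and to obtain uniform perimeter bounds. Since $h\to+\infty$ near $\partial_+X$ and $h\to-\infty$ near $\partial_-X$, the bulk term $-\int(\chi_\Omega-\chi_{\Omega_0})h$ is nonnegative wherever $\Omega$ differs from $\Omega_0$ near the boundary in the unfavorable direction. So only a compact region $X_\epsilon=\{x:\mathrm{dist}(x,\partial X)\ge\epsilon\}$, where $|h|\le C_\epsilon$, can contribute negatively. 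There the bulk term is bounded by $C_\epsilon\mathcal H^{n+1}(X)$, which gives $\cA\ge-C$ and $\mathrm{Per}(\Omega_k)\le C$ along the sequence.

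The main obstacle is preventing the minimizing sequence from drifting to $\partial X$, where $h$ blows up and compactness in $\mathcal C$ could fail. I would handle this by a cut-and-paste comparison. Choose $\epsilon$ so small that $h>1$ on the $\epsilon$-collar $U_+$ of $\partial_+X$, $h<-1$ on the collar $U_-$ of $\partial_-X$, and $\Omega_0$ contains $U_+$ and misses $U_-$. Replace $\Omega_k$ by $(\Omega_k\cup U_+)\setminus U_-$, choosing the cutoff level $t\in(\epsilon/2,\epsilon)$ by coarea so that the added boundary area is controlled by the volume change. The bulk term then decreases by at least $\mathcal H^{n+1}(\Omega_k\triangle\Omega_0\cap U_\pm)$ times $\min|h|$. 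If $|h|$ dominates the geometric constant coming from the coarea estimate, the modified set has $\cA$ no larger (up to $o(1)$). Hence we may assume every $\Omega_k\triangle\Omega_0$ lies in the fixed compact set $X_{\epsilon/2}$. (Alternatively, Zhu's argument shows that a solution of the auxiliary problem with $h$ truncated has boundary in a compact set by a barrier/maximum principle using $|h|$ large near $\partial X$. I would follow whichever is cleaner, with Zhu \cite[Proposition 2.1]{ZhuWidth} as the template.)

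With supports confined to a compact set and perimeters bounded, BV compactness gives $\chi_{\Omega_k}\to\chi_\Omega$ in $L^1$ along a subsequence. Lower semicontinuity of perimeter, together with $L^1$-continuity of the bulk term (since $h$ is bounded on $X_{\epsilon/2}$), shows that $\Omega\in\mathcal C$ is a minimizer.

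For regularity, $\Omega$ minimizes perimeter plus a bounded volume term locally, so $\partial^*\Omega$ is an almost-minimizing boundary: $\mathrm{Per}(\Omega;B_r)\le\mathrm{Per}(\Omega';B_r)+Cr^{n+1}$. By the regularity theory of De Giorgi, Federer and Tamanini, the singular set has Hausdorff dimension at most $n-7<0$ when $n\le6$, so $\partial\Omega$ is $C^{1,\alpha}$. The Euler–Lagrange equation $H=h$ on $\partial\Omega$ then bootstraps this, via elliptic regularity, to smoothness. Therefore $\Omega$ is a smooth $(n+1)$-dimensional region with smooth boundary in $\mathrm{int}(X)$, as claimed in \Cref{p: existence}.
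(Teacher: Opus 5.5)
Your overall scheme (lower bound and perimeter bound, confining a minimizing sequence away from $\partial X$, BV compactness with lower semicontinuity, then regularity of almost-minimizing boundaries with empty singular set since $n\le 6$) is the same as in Zhu's Proposition~2.1. That citation is all the paper gives for this statement, and your outer steps are fine. The gap is in the confinement step, which you rightly call the crux.

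With $\rho_+=d(\cdot,\partial_+X)$, a coarea choice of $t\in(\epsilon/2,\epsilon)$ only yields
\[
\mathcal{H}^n(\{\rho_+=t\}\setminus\Omega_k)\le \tfrac{2}{\epsilon}\,\mathcal{H}^{n+1}(U_+\setminus\Omega_k).
\]
The right side involves the whole $\epsilon$-collar. The bulk gain from passing to $\Omega_k\cup\{\rho_+<t\}$ is only $\int_{\{\rho_+<t\}\setminus\Omega_k}h$, so the two sides are not comparable. Even ignoring this mismatch, making $\min_{U_\pm}|h|$ large forces $\epsilon\to0$, so the constant $2/\epsilon$ grows as well. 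Closing the argument would need $\min_{U_\pm}|h|\ge 2/\epsilon$, i.e.\ $|h|$ growing at least like a fixed large multiple of $1/d(\cdot,\partial X)$. The hypothesis $h\to\pm\infty$ allows arbitrarily slow blow-up (e.g.\ like $\log(1/d)$). So ``if $|h|$ dominates the geometric constant'' cannot be arranged, and your parenthetical alternative does not say what $|h|$ must beat.

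The missing idea is to compare $h$ with the mean curvature of the equidistant foliation, which is bounded independently of the collar width. Since $\partial X$ is smooth and compact, $\rho_\pm=d(\cdot,\partial_\pm X)$ is smooth on $\{\rho_\pm<\tau_0\}$ with $|\Delta\rho_\pm|\le C_0$. Since $h\to\pm\infty$, there is $\tau<\tau_0$ with $\pm h>C_0$ on $\{\rho_\pm<\tau\}$.

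Take $\Omega\in\mathcal C$ and a.e.\ $t<\tau$. The set $E=\{\rho_+<t\}\setminus\Omega$ is compactly contained in $\mathrm{int}(X)$, because $\Omega$ agrees with $\Omega_0$ near $\partial X$ and $\Omega_0$ contains a neighbourhood of $\partial_+X$. The divergence theorem for $\nabla\rho_+$ on $E$ gives
\[
\mathcal{H}^n(\{\rho_+=t\}\setminus\Omega)\le \mathrm{Per}(\Omega;\{\rho_+<t\})+C_0\,\mathcal{H}^{n+1}(E).
\]
Hence
\[
\cA(\Omega\cup\{\rho_+<t\})\le \cA(\Omega)-\int_E(h-C_0)\,d\mathcal{H}^{n+1}\le\cA(\Omega),
\]
and the symmetric statement holds near $\partial_-X$. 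Use this calibration (barrier) inequality in place of the coarea step. You may then assume $\Omega_k\triangle\Omega_0\subset\{\rho_+\ge\tau/2\}\cap\{\rho_-\ge\tau/2\}$, and the rest of your argument goes through.
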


\begin{definition}
    We call a minimizer $\Omega \in \mathcal{C}$ of $\mathcal{A}$ a $\mu$-bubble.
\end{definition}

Next, we record the first and second variations of $\cA$.
 
 \begin{lemma}\label{lemm:1st-var}
Let $\{\Omega_t\}_{t\in(-1,1)}$ be a smooth $1$-parameter family of regions in $\mathcal{C}$ with $\Omega_0 = \Omega$, $\partial \Omega_t=\Sigma_t$, and normal speed $\psi\in C^\infty_c(\Sigma)$ at $t=0$, then 
\[\frac{d}{dt}\cA (\Omega_t)\big|_{t=0}=\int_{\Sigma} (H - h)\psi  \, dA_g,\]
where $\Sigma=\partial \Omega$ and $H$ is the mean curvature of $\Sigma$ which is computed with respect to $\nu$, the outward pointing unit normal to $\partial \Omega=\Sigma$. In particular, a $\mu$-bubble $\Omega$ satisfies a prescribed mean curvature condition
\[
H = h
\]
along $\partial \Omega$. 
\end{lemma}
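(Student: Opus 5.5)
We differentiate the two terms of $\cA(\Omega_t)$ separately and add the results. Write $\Phi_t$ for the flow generated by a smooth vector field $V$ on $X$ that equals $\psi\nu$ along $\Sigma$ and is compactly supported in $\mathrm{int}(X)$. Such a $V$ exists because $\psi\in C^\infty_c(\Sigma)$ and $\Sigma\subset\mathrm{int}(X)$, which holds since $\Omega\triangle\Omega_0\Subset\mathrm{int}(X)$ and $\partial\Omega_0\subset\mathrm{int}(X)$. We may take $\Omega_t=\Phi_t(\Omega)$. Only the normal component of the velocity matters to first order, and any tangential component contributes a boundary term on $\partial\Sigma$ that vanishes by compact support. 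Moreover $\Omega_t\triangle\Omega_0\Subset\mathrm{int}(X)$ for small $t$, so $\Omega_t\in\mathcal C$. By \Cref{p: existence} the minimizer is smooth, so $\Sigma$ is a smooth hypersurface and classical variation formulas apply.

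For the area term, use the standard first variation of area:
\[
\frac{d}{dt}\Big|_{t=0}\mathcal H^n(\Sigma_t)=\int_\Sigma \mathrm{div}_\Sigma V\,dA_g=\int_\Sigma H\psi\,dA_g .
\]
Here $H$ is the trace of the second fundamental form with respect to the outward normal $\nu$, with the sign convention for which this formula holds, and the tangential divergence integrates to zero by compact support.

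For the bulk term, only $\int_X\chi_{\Omega_t}h$ depends on $t$, because $\int_X\chi_{\Omega_0}h$ is a fixed constant. It is finite since $h$ is smooth on the compact region containing $\Omega\triangle\Omega_0$. We compute
\[
\frac{d}{dt}\Big|_{t=0}\int_{\Omega_t}h=\int_\Omega\mathrm{div}(hV)\,d\mathcal H^{n+1}=\int_\Sigma h\langle V,\nu\rangle\,dA_g=\int_\Sigma h\psi\,dA_g .
\]
The first equality is the transport formula and the second is the divergence theorem. One point needs care here: $h$ is unbounded near $\partial X$, and $\Omega$ contains $\partial_+X$ where $h\to+\infty$. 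To handle this, rewrite the bulk term as $\int_X(\chi_{\Omega_t}-\chi_\Omega)h$ plus a constant. Since $\mathrm{supp}\,V$ is a compact subset $K\subset\mathrm{int}(X)$, we have $\Omega_t\triangle\Omega\subset K$, so the integrand is supported in $K$ where $h$ is bounded and smooth. Differentiation under the integral and the divergence theorem then apply to $\Omega\cap K'$ for a smooth compact domain $K'\supset K$, and $hV$ vanishes on $\partial K'$.

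This localization is the main technical step; the rest is routine. Subtracting the two contributions gives
\[
\frac{d}{dt}\cA(\Omega_t)\Big|_{t=0}=\int_\Sigma (H-h)\psi\,dA_g .
\]

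For the consequence, suppose $\Omega$ is a $\mu$-bubble. Then $t\mapsto\cA(\Omega_t)$ has a minimum at $t=0$ for every $\psi\in C_c^\infty(\Sigma)$, so $\int_\Sigma(H-h)\psi\,dA_g=0$ for all such $\psi$. Since $H-h$ is continuous on $\Sigma$, the fundamental lemma of the calculus of variations gives $H=h$ along $\Sigma=\partial\Omega$.
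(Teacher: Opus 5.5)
Your proposal is correct. The paper gives no proof of this lemma: it simply records it as the standard first variation of the $\mu$-bubble functional, as in Chodosh--Li and Zhu. Your argument is exactly that standard derivation: the first variation of area, then the transport formula and the divergence theorem for the bulk term, then the fundamental lemma at a minimizer.

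One small slip should be fixed. You call $\int_X\chi_{\Omega_0}h\,d\mathcal{H}^{n+1}$ a finite constant, but it need not be finite. Since $h\to+\infty$ on $\partial_+X\subset\Omega_0$, and for the $\tan$-type weights used later $h$ grows like the inverse distance to $\partial_+X$, it is not integrable there. So you should not split that term off on its own. Your subsequent rewriting, as $\int_X(\chi_{\Omega_t}-\chi_\Omega)h$ plus the finite constant $\int_X(\chi_\Omega-\chi_{\Omega_0})h$, is the correct way to handle it and makes the argument rigorous.
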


\begin{lemma}\label{lemm:2nd-var}
Consider a $\mu$-bubble $\Omega$ with $\partial \Omega = \Sigma$. Assume that $\{\Omega_t\}_{t\in(-1,1)}$ is a smooth $1$-parameter family of regions in $\mathcal{C}$ with $\Omega_0 = \Omega$ and normal speed $\psi\in C^\infty_c(\Sigma)$ at $t=0$, then $\frac{d^2}{dt^2}\big|_{t=0}(\cA(\Omega_t))\ge 0$, where 
\[
\frac{d^2}{dt^2}\big|_{t=0}(\cA(\Omega_t))= \int_\Sigma \left(|\nabla_\Sigma \psi|^2- (\Ric_X(\nu,\nu)+|A|^2+\langle \nabla h, \nu \rangle )\psi^2 \right)\, dA_g.
\]
\end{lemma}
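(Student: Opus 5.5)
The second variation formula follows from the standard variation of area combined with the variation of the bulk term, evaluated at a critical point where $H=h$. I will realize the family by a normal flow: extend $\psi$ to a vector field $Y=\psi\nu$ near $\Sigma$ and let $\Omega_t$ be the image of $\Omega$ under its flow. Write $\Sigma_t=\partial\Omega_t$, with outward normal $\nu_t$, mean curvature $H_t$, and normal speed $\psi_t=\langle Y,\nu_t\rangle$.

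First, by \Cref{lemm:1st-var} applied at every time $t$ (not only $t=0$),
\[
\frac{d}{dt}\cA(\Omega_t)=\int_{\Sigma_t}(H_t-h)\,\psi_t\,dA_t .
\]
Differentiating once more at $t=0$ gives
\[
\frac{d^2}{dt^2}\Big|_{t=0}\cA(\Omega_t)=\int_\Sigma \Big(\frac{d}{dt}\Big|_{t=0}(H_t-h)\Big)\psi\,dA+\int_\Sigma (H-h)\,\frac{d}{dt}\Big|_{t=0}\big(\psi_t\,dA_t\big).
\]
The second integral vanishes because $\Omega$ is a $\mu$-bubble, so $H=h$ on $\Sigma$ by \Cref{lemm:1st-var}. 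This is the key simplification: it lets me avoid computing the tangential and area-element variations entirely.

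Next I compute $\frac{d}{dt}(H_t-h)$ at $t=0$. I use the classical variation of mean curvature under a normal deformation with speed $\psi$, with the sign convention in which $H$ is computed with respect to the outward normal $\nu$ (this is the convention fixed by the first-variation formula $\frac{d}{dt}|\Sigma_t|=\int H\psi$):
\[
\frac{d}{dt}\Big|_{t=0}H_t=-\Delta_\Sigma\psi-\big(\Ric_X(\nu,\nu)+|A|^2\big)\psi .
\]
The function $h$ is evaluated along the moving surface, so by the chain rule its derivative is $\langle\nabla h,\nu\rangle\psi$. Substituting both into the surviving integral and integrating $-\psi\Delta_\Sigma\psi$ by parts (there are no boundary terms since $\psi\in C^\infty_c(\Sigma)$, or $\Sigma$ is closed) yields exactly the displayed expression
\[
\int_\Sigma\Big(|\nabla_\Sigma\psi|^2-\big(\Ric_X(\nu,\nu)+|A|^2+\langle\nabla h,\nu\rangle\big)\psi^2\Big)\,dA_g .
\]

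Finally, nonnegativity follows because $\Omega$ minimizes $\cA$ on $\mathcal C$ and each $\Omega_t$ lies in $\mathcal C$ for small $|t|$. Hence $t=0$ is a minimum of the smooth function $t\mapsto\cA(\Omega_t)$, so its second derivative there is $\ge0$.

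The main obstacle is sign bookkeeping. The sign of $\frac{d}{dt}H_t$ must be consistent with the outward-normal convention and with the sign of $h$ in $\cA$. I would check this on a concentric sphere in $\bR^{n+1}$ with $h$ constant, where the computation can be done by hand.
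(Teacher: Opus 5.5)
Your argument is correct. The paper gives no proof of this lemma; it simply records it as the standard second variation of the $\mu$-bubble functional (as in Chodosh--Li and Zhu). Your computation is exactly that standard derivation: differentiate $\int_{\Sigma_t}(H_t-h)\psi_t\,dA_t$, drop the term multiplied by $H-h=0$, and use $\frac{d}{dt}H_t=-\Delta_\Sigma\psi-(\Ric_X(\nu,\nu)+|A|^2)\psi$ together with $\frac{d}{dt}h=\psi\langle\nabla h,\nu\rangle$. The signs are right for the outward-normal convention.

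The one point to tighten is generality. The lemma is asserted for every smooth family with normal speed $\psi$, not only for the flow of an extension of $\psi\nu$. Your argument covers this case verbatim. If $\Sigma_t$ is parametrized with initial velocity $\psi\nu+T$, with $T$ tangential, the only extra contribution to $\frac{d}{dt}(H_t-h)$ is $\langle\nabla_\Sigma(H-h),T\rangle$. This vanishes because $H\equiv h$ on $\Sigma$.
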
 

The next proposition is a version of a separation result of Gromov \cite[Section 3.7]{GromovFourLectures} for complete manifolds with $\BiRic\geq \kappa>0$ and $\Ric\geq -K$.

\begin{prop}\label{separationthm}
    Fix a constant $K\geq 0$. Let $(X^{n+1},g)$, $n=5$, be a compact Riemannian $(n+1)$-manifold with boundary. Assume that $\partial X \neq \emptyset$ and that $\partial X$ has at least two boundary components. Let $\partial X = \partial_-X \sqcup \partial_+X$ where $\partial_\pm X \neq \emptyset$ are unions of connected components of the boundary. Assume that $\BiRic\geq \kappa>0$ and $\Ric\geq -K$. Then there is a constant $C(\kappa,K)>\max\left\{128\pi,\frac{32K^2\pi}{\kappa^2},\frac{8\pi}{\kappa}\right\}$ such that if $d(\partial_-X,\partial_+ X)> C$, there exists a smooth embedded closed 2-sided hypersurface $\Sigma\subset \mathrm{int}(X) $ that separates $\partial_-X$ from $\partial_+X$ and every connected component of $\Sigma$ has finite fundamental group.
\end{prop}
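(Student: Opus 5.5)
The plan is to construct a $\mu$-bubble in the band $X^6$ with a weight $h$ adapted to $\kappa$ and $K$, show that its boundary $\Sigma^5$ carries, after a warped-product (conformal) modification, a metric of positive Ricci-type curvature, and conclude that $\pi_1$ of each component is finite. The Ricci lower bound $\Ric\ge -K$ is what lets us get Ricci-type control on $\Sigma$ rather than only scalar control.

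\textbf{Step 1: the weight and the $\mu$-bubble.} Let $\rho$ be a smoothing of $d(\cdot,\partial_-X)$ with $|\nabla\rho|\le 2$, rescaled so that its range is an interval of length $L<C$ fitting inside the band. Set $h=-\lambda\tan(\lambda\rho-\pi/2)$, or a similar $\cot$-type profile, with $\lambda\sim \pi/L$ chosen so that $h\to\mp\infty$ on $\partial_\mp X$. This $h$ satisfies
\[
\tfrac{n}{n-1}h^2-2|\nabla h|\ge -c\lambda^2 .
\]
Since $d(\partial_-X,\partial_+X)>C$ and $C$ exceeds the listed quantities, we can make $\lambda^2$ small compared with $\kappa$ and $\kappa^2/K^2$. \Cref{p: existence} then gives a minimizer $\Omega$, and we take $\Sigma=\partial\Omega$. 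It is smooth, since $n=5\le 6$, two-sided, and separates $\partial_-X$ from $\partial_+X$ because $\Omega\triangle\Omega_0\Subset\mathrm{int}(X)$.

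\textbf{Step 2: stability gives a positive warping function.} By \Cref{lemm:1st-var}, $H=h$ on $\Sigma$. By \Cref{lemm:2nd-var}, on each component the operator $-\Delta_\Sigma-(\Ric(\nu,\nu)+|A|^2+\langle\nabla h,\nu\rangle)$ is nonnegative. Hence it has a first eigenfunction $u>0$ with
\[
-\Delta_\Sigma u\ge(\Ric(\nu,\nu)+|A|^2-|\nabla h|)u .
\]
Consider the warped product $\tilde\Sigma=\Sigma\times_u S^1$. For orthonormal $e_1,e_2$ tangent to $\Sigma$, the Gauss equation gives
\[
\Ric_\Sigma(e_1,e_1)=\Ric_X(e_1,e_1)-\sec_X(e_1,\nu)+Hh_{11}-\sum_j h_{1j}^2 .
\]
The key algebraic point is that the BiRicci quantity $\BiRic(e_1,\nu)=\Ric(e_1,e_1)+\Ric(\nu,\nu)-\sec(e_1,\nu)$ appears exactly when the stability term $\Ric(\nu,\nu)$ is combined with the intrinsic $\Ric_\Sigma(e_1,e_1)$. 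This is the Shen--Ye mechanism. It suggests showing that the $u$-weighted Ricci tensor $\Ric_\Sigma-u^{-1}\nabla^2u$ (the Bakry--\'Emery type Ricci of the warped product in the $\Sigma$-directions) is bounded below by a positive constant, via a spectral/Bakry--\'Emery Ricci estimate. Pointwise this is not possible, because $u$ satisfies only an integral (eigenvalue) inequality. So the realistic approach is the one Chodosh--Li and Zhou--Zhu use: derive a spectral Ricci lower bound $\lambda_1(-\gamma\Delta+\Ric_\Sigma)\ge c>0$, with the second fundamental form terms absorbed using $H=h$, the trace-free estimate, and the choice $\gamma$ below the dimensional threshold for $n=5$. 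The bound $\Ric\ge -K$ is needed to control the $\Ric_X(e_1,e_1)$ terms in directions where BiRicci alone does not help. This is where $C>32K^2\pi/\kappa^2$ enters, as the term $K|\nabla h|$ or $Kh^2$ must be dominated by $\kappa$.

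\textbf{Step 3: conclude finite $\pi_1$.} A closed manifold with a positive spectral Ricci lower bound, with $\gamma$ in the admissible range for dimension $5$, has bounded diameter for its universal cover. This follows from the Antonelli--Xu / Chodosh--Li--Minter--Stryker type Bonnet--Myers theorem for spectral Ricci. Applying it to the pullback to the universal cover shows that cover is compact, so $\pi_1$ of each component of $\Sigma$ is finite.

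\textbf{Main obstacle.} The hardest step is Step 2: arranging the algebra so that the stability inequality produces a spectral Ricci bound on $\Sigma^5$ with constant $\gamma$ admissible in dimension $5$. The difficulty is handling the $|A|^2$ and $Hh_{11}$ terms, together with $\Ric_X(e_1,e_1)$, using only $\BiRic\ge\kappa$ and $\Ric\ge-K$. My first attempt would be to use the full stability term $|A|^2$ to absorb $\sum h_{1j}^2-Hh_{11}$ via Cauchy--Schwarz with the mean curvature equal to $h$. Any leftover $h^2$ and $|\nabla h|$ would then be controlled by the band width through the choice of $\lambda$.
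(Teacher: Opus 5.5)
\textbf{There is a genuine gap.} Your architecture matches the paper's: a $\mu$-bubble in the band, then a spectral Ricci bound on $\Sigma^5$ from stability, the Gauss equation and BiRicci, then Antonelli--Xu. But the step that carries all the content is left open. Moreover, the heuristic you give for it, run with the stability inequality at weight $1$ as in your ``first attempt,'' fails in exactly this dimension. The Gauss equation gives
\[
\Ric_X(\nu,\nu)+|A|^2+\Ric_\Sigma(e_1,e_1)=\BiRic(\nu,e_1)+|A|^2-\sum_j A_{1j}^2+hA_{11}.
\]
Using $H=h$ and $n=5$, the best Cauchy--Schwarz bound is $|A|^2-\sum_j A_{1j}^2+hA_{11}\ge \frac14(h-A_{11})^2+hA_{11}=\frac14(h+A_{11})^2\ge 0$. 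This is sharp: it is attained at $A=\mathrm{diag}(-h,\frac h2,\frac h2,\frac h2,\frac h2)$. So no multiple of $h^2$ survives, and nothing absorbs $-|\nabla h|$, which is unbounded since $h\to\pm\infty$ on the ends of the band.

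The missing idea is to multiply the stability inequality by $\gamma>1$ before adding $\lambda_{\Ric_\Sigma}\psi^2$. The same algebra, plus completing the square in $A_{11}$, then leaves $\frac{4\gamma^2-(n-1)}{4(n\gamma-(n-1))}h^2$. This is positive iff $\gamma>\frac{\sqrt{n-1}}{2}=1$. Antonelli--Xu requires $\gamma\le\frac{n-1}{n-2}=\frac43$, and your sketch mentions only this upper threshold.

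This also changes where $\Ric\ge -K$ enters. $\Ric_X(e_1,e_1)$ is absorbed completely into $\BiRic(\nu,e_1)$ and never needs separate control. The leftover term is $(\gamma-1)\Ric_X(\nu,\nu)\ge-(\gamma-1)K$, which exists only because of the extra weight $\gamma>1$.

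The paper takes $\gamma=1+\eta$ with $\eta=\min\{\frac13,\frac{\kappa}{2K}\}$, so this term costs at most $\kappa/2$. The $h^2$ coefficient $\frac{\gamma^2-1}{5\gamma-4}$ is then small; the paper bounds it below by $\eta^2/2$. Consequently the weight $h=\tan\bigl(\pi(\rho-C'-\frac C2)/C\bigr)$ needs $C>8\pi/\eta^2$ to give $\frac{\eta^2}{2}h^2-2|\nabla h|\ge-\frac{4\pi}{C}$, and this is where $32K^2\pi/\kappa^2$ comes from. Separately, $C>8\pi/\kappa$ makes the final constant $\frac\kappa2-\frac{4\pi}{C}$ positive.

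Two consequences for your write-up:
\begin{itemize}
\item Your Step 1 normalization $\frac{n}{n-1}h^2-2|\nabla h|$ is the scalar-curvature one and is not available here.
\item The $K$-dependence of $C$ does not come through a ``$K|\nabla h|$ or $Kh^2$'' term; it comes indirectly through the small coefficient $\eta$.
\end{itemize}
Finally, the warped-product/eigenfunction detour is unnecessary. Once the pointwise inequality holds, inserting test functions $\psi$ into the $\gamma$-weighted stability inequality directly gives $\int_\Sigma\left(\gamma|\nabla\psi|^2+\lambda_{\Ric_\Sigma}\psi^2\right)\ge\left(\frac\kappa2-\frac{4\pi}{C}\right)\int_\Sigma\psi^2$. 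Your Step 3 then goes through as stated.
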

For the sake of completeness, we provide a quick sketch of the proof of \Cref{separationthm}. Furthermore, we have stated the proposition in terms of $n$ so that it is clear in the proof where $n=5$ is essential.
\begin{proof}
Let $D=d\left(\partial_-X,\partial_+ X\right)>C$, let $0<\eps< \min\left\{\frac{D-C}{2},\frac{1}{4}\right\}$, let $C'>0$ be such that $\eps<C'<C'+C<D-\eps$. By \cite[\S 2]{GreeneWuCinftyapprox}, there exists a smooth function $\rho:X\to\bR$ such that $\left|\rho(x)-d(x,\partial_-X)\right|<\eps$ and $|\Lip(\rho)|\leq 1+\eps$ for all $x\in X$. We can perturb further and assume that $C'$ and $C'+C$ are regular values of $\rho$. Define the band
\[
X_0=\{x\in X: C'\leq \rho(x)\leq C'+C\}\Subset \mathrm{int}(X).
\]

\noindent Finally, define the smooth function $h:\mathrm{int}(X_0)\to \bR$
\begin{equation}
    h(x):=\tan\left(\frac{\rho(x)-C'-\frac{1}{2}C}{C}\pi\right)
\end{equation}
and note that $h\to\pm\infty$ on $\partial_\pm X_0$, respectively.%  (see \Cref{hgraph}).

With respect to this $h$, we can find a $\mu$-bubble ${\Omega}$ with boundary $\partial^*\Omega=\Sigma$ by using \Cref{p: existence}.

By \Cref{lemm:2nd-var}, for every $\psi\in C^\infty_c(\Sigma)$, 
\begin{align}
\frac{d^2}{dt^2}\big|_{t=0}(\cA(\Omega_t))= \int_\Sigma \left(|\nabla_\Sigma \psi|^2- (\Ric_X(\nu,\nu)+|A|^2+\langle \nabla h, \nu \rangle )\psi^2 \right)\, dA_g\geq 0.
\end{align}
Next, we choose $e_1$ so that $\Ric_{\Sigma,p}(e_1,e_1)=\inf\{\Ric_{\Sigma,p}(v,v)|v\in T_p\Sigma, |v|=1\}:=\lambda_{\Ric_\Sigma}(p)$, let $\gamma>1$, and compute:
\begin{equation}\label{eq: biricci computation}
\begin{split}
    \gamma\left(\Ric_X(\nu,\nu) +|A|^2\right) +\lambda_{\Ric_\Sigma} &= \BiRic(\nu,e_1) + (\gamma -1)\Ric_X(\nu,\nu)\\
    &\qquad+\gamma|A|^2-\sum_{j=1}^nA(e_1,e_j)^2 + hA(e_1,e_1)\\
    &\geq \BiRic(p) - (\gamma-1)K + (\gamma-1)A(e_1,e_1)^2\\
    &\qquad+\frac{\gamma}{n-1}(h-A(e_1,e_1))^2+hA(e_1,e_1)\\
    &\geq \kappa-(\gamma -1)K + \frac{4\gamma^2-(n-1)}{4(n\gamma-(n-1))}h^2\\
    &\qquad+ \frac{n\gamma-(n-1)}{n-1}\left(A(e_1,e_1)+\frac{n-1-2\gamma}{2(n\gamma-(n-1))}h\right)^2\\
    &\geq \kappa-(\gamma -1)K + \frac{4\gamma^2-(n-1)}{4(n\gamma-(n-1))}h^2,
\end{split}
\end{equation}
where the first equality is the Gauss--Codazzi equations, the second is an application of Cauchy--Schwarz in a manner similar to the Kato inequality to the symmetric matrix $A$, and the third inequality comes from completing the square.
Therefore, from the stability inequality we have,
\begin{align}\label{e: equation}
\begin{split}
  \int_\Sigma \gamma|\nabla_\Sigma \psi|^2 &+\lambda_{\Ric_\Sigma}\psi^2\,dA_g \\
  &\geq \int_\Sigma \left(\gamma\Ric_X(\nu,\nu)+\gamma|A|^2+\lambda_{\Ric_\Sigma}+\gamma\langle \nabla h, \nu \rangle \right)\psi^2 \, dA_g\\
  &\geq \int_\Sigma \left(\kappa-(\gamma -1)K + \frac{4\gamma^2-(n-1)}{4(n\gamma-(n-1))}h^2-\gamma|\nabla h|\right)\psi^2 \, dA_g.
\end{split}
\end{align}
Now, if we choose $\gamma > \frac{\sqrt{n-1}}{2}$, the coefficient of $h^2$ is positive. 

In the rest of the proof we use that $n=5$. The importance of this dimensional restriction comes from the fact that $\gamma>1=\frac{\sqrt{5-1}}{2}$.

As $n=5$, we can choose $\gamma=1+\eta$, where $\eta =\min\left\{\frac{1}{3},\frac{\kappa}{2K}\right\}$ if $K>0$ and $\eta=\
\frac{1}{3}$ if $K=0$. We note 
\begin{equation}\label{eq: lowerboundh}
    \kappa-(\gamma-1) K + \frac{4\gamma^2-(n-1)}{4(n\gamma-(n-1))}h^2-\gamma|\nabla h|\geq \frac{\kappa}{2} + \frac{\eta^2}{2} h^2 -2|\nabla h|.
\end{equation}

Now, we compute:
\begin{align*}
 \frac{\eta^2}{2}h^2 -2|\nabla h| &=\frac{\eta^2}{2}\tan^2\left(\frac{\rho(x)-C'-\frac{1}{2}C}{C}\pi\right) -\frac{2|\nabla\rho(x)|\pi}{C}\sec^2\left(\frac{\rho(x)-C'-\frac{1}{2}C}{C}\pi\right)\\
     &\geq \frac{\eta^2}{2}\tan^2\left(\frac{\rho(x)-C'-\frac{1}{2}C}{C}\pi\right) -\frac{2 \pi}{C}(1+\eps)\sec^2\left(\frac{\rho(x)-C'-\frac{1}{2}C}{C}\pi\right)\\
     &\geq \frac{2\pi}{C}(1+\eps)\tan^2\left(\frac{\rho(x)-C'-\frac{1}{2}C}{C}\pi\right) -\frac{2\pi}{C}(1+\eps) \sec^2\left(\frac{\rho(x)-C'-\frac{1}{2}C}{C}\pi\right)\\
     &\geq -\frac{4\pi}{C},
\end{align*}
where the penultimate inequality follows since $C> \frac{8\pi}{\eta^2}$ and $0<\eps<\frac{1}{4}$ and the final inequality follows by the trigonometric identity $\tan^2(\theta)-\sec^2(\theta)=-1$ and $0<\eps<\frac{1}{4}$.

Therefore, by \eqref{e: equation} and \eqref{eq: lowerboundh}, we have
\[
    \int_{\Sigma} \left(\gamma|\nabla_\Sigma \psi|^2+\lambda_{\Ric_\Sigma}\psi^2 \right)\, dA_g\geq \int_{\Sigma} \left(\frac{\kappa}{2} -\frac{4\pi}{C} \right)\psi^2\, dA_g.
\]
Since $C>\frac{8\pi}{\kappa}$, we conclude that every connected component $\Sigma'$ of $\Sigma$ has positive spectral Ricci curvature. Note $\Sigma'$ is a closed manifold and $\gamma\leq\frac{4}{3}$ and so by \cite[Theorem 1]{AntonelliXuNewSpectralBishopGromov}, we have that $\pi_1(\Sigma')$ is finite.
\end{proof}

The next proposition is a version of an exhaustion result of Gromov \cite[Section 3.7]{GromovFourLectures} (cf.  \cite[Proposition 1.8]{ChenOptimal}) for complete manifolds with positive BiRicci curvature with $C$-quadratic decay at infinity.
   
\begin{prop}\label{exhaustionthm}
     Let $(X^{n+1},g)$, $n=5,6$, be a complete,  noncompact, orientable Riemannian $(n+1)$-manifold. Assume that $g$ has positive BiRicci curvature with at most $C$-quadratic decay for some $C>C_{n+1}$, where $C_6=\frac{32}{21}$ and $C_7=\frac{25}{8}$ and $\Ric\geq 0$. Then $X$ admits an exhaustion $K_1\subset K_2\subset \cdots\subset X$ such that $X=\cup_{i=1}^\infty K_i$, each $K_i$ is a compact codimension-0 smooth submanifold with smooth boundary  $\partial K_i:=\Sigma_i$, and every connected component of $\Sigma_i$ is a compact orientable $n$-manifold with finite fundamental group.
\end{prop}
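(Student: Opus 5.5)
We follow the scheme of Chen's exhaustion result \cite[Proposition 1.8]{ChenOptimal}, with the spectral-Ricci input from \Cref{separationthm}. Fix $x_0$ and take a smoothed distance function $\rho$ with $|\rho-r_{x_0}|<\eps$ and $\Lip(\rho)\le 1+\eps$, as in the proof of \Cref{separationthm}. Choose radii $R_1<R_2<\cdots\to\infty$. We build $K_i$ inductively. Given $K_{i-1}$, pick a band
\[
\{a_i\le\rho\le b_i\}\supset K_{i-1}^c\cap\{\rho\le R_i\},
\]
with $a_i\ge D_0$ large and $b_i=\lambda a_i$ for a fixed ratio $\lambda>1$. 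In this band $\BiRic\ge C/\rho^2$. Run the $\mu$-bubble of \Cref{p: existence} on the compact region $\{\rho\le b_i\}$, with $\partial_+$ the inner side and $\partial_-=\{\rho=b_i\}$, using a weight $h=h(\rho)$ that blows up at $\rho=a_i$ and $\rho=b_i$. Set $K_i:=\Omega\cup K_{i-1}$, perhaps after adding the bounded complementary components so that $K_i$ is a compact smooth domain. Since $h$ is $+\infty$ at $a_i$ and $-\infty$ at $b_i$, $\Sigma_i=\partial\Omega$ lies in the band. Orientability of $\Sigma_i$ is automatic because it is a boundary in the orientable $X$.

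The curvature step repeats \eqref{eq: biricci computation} with $K=0$ (since $\Ric\ge0$), $\gamma>\frac{\sqrt{n-1}}{2}$ and $\kappa$ replaced by $C/\rho^2$. Using the stability inequality of \Cref{lemm:2nd-var} we get, for all $\psi$,
\[
\int_\Sigma\gamma|\nabla_\Sigma\psi|^2+\lambda_{\Ric_\Sigma}\psi^2\ge\int_\Sigma\Big(\frac{C}{\rho^2}+c_{n,\gamma}h^2-\gamma|h'(\rho)|(1+\eps)\Big)\psi^2,
\qquad c_{n,\gamma}=\frac{4\gamma^2-(n-1)}{4(n\gamma-(n-1))}.
\]
We want the bracket to be at least $\delta/\rho^2>0$. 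Try the scale-invariant ansatz $h=\frac{\beta}{\rho}\tan\!\big(\alpha\log(\rho/\sqrt{a_ib_i})\big)$ with $\alpha\log\lambda=\pi$, or solve the Riccati ODE
\[
\gamma h'=c_{n,\gamma}h^2+\frac{C-\delta}{\rho^2}.
\]
Substituting $h=\beta/\rho$ gives $\gamma\beta^2c+\gamma\beta+(C-\delta)=0$ up to scaling. A solution blowing up at both ends over a finite logarithmic range exists exactly when the discriminant is negative, that is when $C>\frac{\gamma}{4c_{n,\gamma}}$. We then optimize over the admissible $\gamma$. The catch is that Antonelli--Xu \cite[Theorem 1]{AntonelliXuNewSpectralBishopGromov} gives finiteness of $\pi_1$ for spectral Ricci positivity $\int\gamma|\nabla\psi|^2+\lambda_{\Ric}\psi^2>0$ only for $\gamma$ below some dimensional threshold, which on the $n$-dimensional $\Sigma$ should be $\gamma<\frac{n-1}{n-2}$. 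Minimizing $\frac{\gamma(n\gamma-(n-1))}{4\gamma^2-(n-1)}$ over $\gamma\in\big(\frac{\sqrt{n-1}}2,\frac{n-1}{n-2}\big]$ should give exactly the thresholds: at $n=5$, $\gamma=\frac43$ yields $\frac{32}{21}$, and at $n=6$, $\gamma=\frac54$ yields $\frac{25}{8}$. This matches $C_6,C_7$ and confirms the plan. Since $\rho\sim r_{x_0}$ with error $\eps$, we absorb $\eps$ by taking $C$ strictly larger than the threshold and $a_i$ large.

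Conclusion and main obstacle. Each component $\Sigma'$ of $\Sigma_i$ is compact and has uniformly positive spectral Ricci curvature, because $\rho$ is bounded on the band. With $\gamma$ in the admissible range, \cite[Theorem 1]{AntonelliXuNewSpectralBishopGromov} gives that $\pi_1(\Sigma')$ is finite. The nesting $K_{i-1}\subset K_i$ and $\bigcup K_i=X$ follow from the choice of bands, with $a_i$ beyond $K_{i-1}$ and $R_i\to\infty$. The main obstacle is the ODE/Riccati step: one must construct $h$ that blows up at both ends, has the correct sign behaviour, and keeps the inequality with the $(1+\eps)$ Lipschitz loss while exactly reaching the sharp constant. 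I would handle this by using the explicit $\tan$-of-log solution and checking the discriminant condition directly.
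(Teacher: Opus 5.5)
Your plan is the paper's proof: $\mu$-bubbles in successive disjoint logarithmic annuli of a smoothed distance function, with a $\tan$-of-$\log$ weight. You take $\gamma=\frac{n-1}{n-2}$, the endpoint allowed by \cite[Theorem 1]{AntonelliXuNewSpectralBishopGromov}, and get finiteness of $\pi_1$ from positive spectral Ricci curvature. The paper's weight
\[
h=-\frac{\gamma(1+\eps)}{\alpha\rho}\left(-\frac12+\tilde\mu\tan\left(\tilde\mu\log\frac{\rho}{1+\eps}\right)\right),\qquad \tilde\mu^2=\frac{\alpha\tilde C}{\gamma^2}-\frac14,\quad C_{n+1}<\tilde C<C,
\]
with $\alpha=c_{n,\gamma}$, is exactly the explicit Riccati solution you have in mind. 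It satisfies $-\gamma(1+\eps)\,h'(\rho)=\alpha h^2+\tilde C(1+\eps)^2\rho^{-2}$. The margin $\tilde C<C$ absorbs both the $(1+\eps)$ Lipschitz loss and the discrepancy between $\rho$ and $r_{x_0}$.

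There is, however, an arithmetic slip in your threshold computation, and as written it contradicts your own numbers.

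\begin{enumerate}
\item Substituting $h=\beta/\rho$ into $\gamma|h'|=c_{n,\gamma}h^2+(C-\delta)\rho^{-2}$ gives $c_{n,\gamma}\beta^2-\gamma\beta+(C-\delta)=0$. There is no factor $\gamma$ on $\beta^2$, so the discriminant is negative iff $C-\delta>\frac{\gamma^2}{4c_{n,\gamma}}$.
\item The function you propose to minimize, $\frac{\gamma(n\gamma-(n-1))}{4\gamma^2-(n-1)}$, equals $\frac87$ at $(n,\gamma)=(5,\frac43)$ and $\frac52$ at $(6,\frac54)$. It is $\frac{\gamma^2(n\gamma-(n-1))}{4\gamma^2-(n-1)}=\frac{\gamma^2}{4c_{n,\gamma}}$ that yields $\frac{32}{21}$ and $\frac{25}{8}$.
\item For $\frac{\gamma}{4c_{n,\gamma}}<C\le\frac{\gamma^2}{4c_{n,\gamma}}$, no radial weight satisfying your inequality and blowing up at both ends exists. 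So the condition you stated is genuinely wrong, not just a typo.
\item Since $h$ runs from $+\infty$ at $a_i$ to $-\infty$ at $b_i$, the equation should read $-\gamma h'=\cdots$.
\end{enumerate}

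Two bookkeeping points also need fixing.
\begin{itemize}
\item The bubble must be run in the band $\{a_i\le\rho\le b_i\}$, which has two boundary sides. The region $\{\rho\le b_i\}$ has only one.
\item Your displayed band condition is garbled. Choose $a_i$ with $K_{i-1}\cup\{\rho\le R_i\}\subset\{\rho<a_i\}$ and set $K_i=\{\rho<a_i\}\cup\Omega$. The paper takes $k_i=k_1+2(i-1)$ so the bands are disjoint. A band meeting $K_{i-1}$ would in general make $\Omega\cup K_{i-1}$ fail to have smooth boundary made of bubble components.
\end{itemize}

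With these corrections your argument coincides with the paper's.
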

For the sake of completeness, we provide a quick proof of \Cref{exhaustionthm} which is based on \cite[Proposition 1.8]{ChenOptimal}.
\begin{proof}
  Fix a base point $x_0\in X$, $\gamma = \frac{n-1}{n-2}$, $\alpha=\frac{4\gamma^2-(n-1)}{4(n\gamma-(n-1))} = \frac{-n^2+8n-8}{8(n-2)}$, and denote by $r(y)=d(x_0,y)$, the distance function to $x_0$. By assumption, there exists a $D_0>1$ such that if $r(y)>D_0$, then $\BiRic(y)\geq \frac{C}{r(y)^2}$. Let $C_{n+1}= \frac{\gamma^2}{4 \alpha}<\tilde{C}<C$.  Now let $\tilde{\mu}=\sqrt{\frac{\alpha\tilde{C}}{\gamma^2}-\frac{1}{4}}>0$, and choose $\eps=\frac{\sqrt{\frac{C}{\tilde{C}}}-1}{\sqrt{\frac{C}{\tilde{C}}}+1}\in(0,1)$. Thus,
 \[
 \frac{(1+\eps)^2D_0^2}{(D_0-\eps)^2} = \left(\frac{1+\eps}{1-\frac{\eps}{D_0}}\right)^2 < \left(\frac{1+\eps}{1-\eps}\right)^2 = \frac{C}{\tilde{C}}.
 \]

 Note that $r\mapsto \frac{(1+\eps)^2r^2}{(r-\eps)^2}$ is a decreasing function for all $r>\eps$. Thus, for all $r\geq D_0$, we have
 \begin{equation}\label{eq: Cestimate}
      \frac{\tilde{C}(1+\eps)^2}{(r-\eps)^2}< \frac{C}{r^2}.
 \end{equation}

 By \cite[\S 2]{GreeneWuCinftyapprox}, there exists a smooth function $\rho:X\to\bR$ such that $|\rho(x)-r(x)|<\eps$ and $|\Lip(\rho)|\leq 1+\eps$ for all $x\in X$. We can perturb further and assume that for all $k\in\bN$, $(1+\eps)\exp\left(\tilde{\mu}^{-1}\left(k\pi-\frac{\pi}{2}\right)\right)$ are regular values. Finally, define the function $h:X\to \bR$
\begin{equation}
    h(x):=-\frac{\gamma(1+\eps)}{\alpha\rho(x)}\left(-\frac{1}{2} + \tilde{\mu} \tan\left(\tilde{\mu}\log\left(\frac{\rho(x)}{1+\eps}\right)\right)\right).
\end{equation}

Take $k_1>0$ to be an integer such that $\exp\left(\tilde{\mu}^{-1}\left(k_1\pi-\frac{\pi}{2}\right)\right)>D_0+1$ and define
\[
X_0=\left\{x\in X|\rho(x)<(1+\eps)\exp\left(\tilde{\mu}^{-1}\left(k_1\pi-\frac{\pi}{2}\right)\right)\right\}
\]
and 
\[
X_1=\left\{x\in X|(1+\eps)\exp\left(\tilde{\mu}^{-1}\left(k_1\pi-\frac{\pi}{2}\right)\right)\leq\rho(x)\leq(1+\eps)\exp\left(\tilde{\mu}^{-1}\left(k_1\pi+\frac{\pi}{2}\right)\right)\right\}
\]
with boundaries
\[
\partial_+X_1 = \left\{x\in X|\rho(x)=(1+\eps)\exp\left(\tilde{\mu}^{-1}\left(k_1\pi-\frac{\pi}{2}\right)\right)\right\}
\]
and 
\[
\partial_-X_1=
\left\{x\in X|\rho(x)=(1+\eps)\exp\left(\tilde{\mu}^{-1}\left(k_1\pi+\frac{\pi}{2}\right)\right)\right\}.
\]

Note that $h\to\pm\infty$ on $\partial_\pm X_1$, respectively and on $X_1$, $\rho>D_0$ and $r>D_0$. Let $L_1\in \left((1+\eps)\exp\left(\tilde{\mu}^{-1}\left(k_1\pi-\frac{\pi}{2}\right)\right),(1+\eps)\exp\left(\tilde{\mu}^{-1}\left(k_1\pi+\frac{\pi}{2}\right)\right)\right)$ be a regular value of $\rho$. Fix a smooth reference region 
\[
\Omega_0 = \left\{x\in X| (1+\eps)\exp\left(\tilde{\mu}^{-1}\left(k_1\pi-\frac{\pi}{2}\right)\right)\leq\rho(x)\leq L_1\right\}.
\]
With respect to this $h$, we can find a $\mu$-bubble ${\Omega_1}$ with boundary $\partial^*\Omega_1\cap \mathrm{int}(X_1)=\Sigma_1$ by using \Cref{p: existence}. Note that $\Sigma_1$ is orientable because $X$ is and let $K_1=X_0\cup \Omega_1$. Then $K_1$ is a smooth compact region and $\partial K_1=\Sigma_1$.

By \Cref{lemm:2nd-var}, \eqref{eq: biricci computation}, and \eqref{e: equation}, we have for every $\psi\in C^\infty_c(\Sigma_1)$, 
\begin{align}\label{e: spectral ricci 2}
\begin{split}
  \int_{\Sigma_1} \gamma|\nabla_{\Sigma_1} \psi|^2 &+\lambda_{\Ric_{\Sigma_1}}\psi^2\,dA_g 
  \geq \int_{\Sigma_1} \left(\frac{C}{r^2} + {\alpha}h^2-\gamma|\nabla h|\right)\psi^2 \, dA_g.
\end{split}
\end{align}

Now, we first compute:
\begin{align*}
    |\nabla h| &= \frac{\gamma(1+\eps)|\nabla\rho(x)|}{\alpha\rho(x)^2}\left|\frac{1}{2}-\tilde{\mu}\tan\left(\tilde{\mu}\log\left( \frac{\rho(x)}{1+\eps}\right)\right) + \tilde{\mu}^2\left(1+\tan^2\left(\tilde{\mu}\log\left( \frac{\rho(x)}{1+\eps}\right)\right)\right)\right|\\
    &=\frac{\gamma(1+\eps)|\nabla\rho(x)|}{\alpha\rho(x)^2}\left(\left(-\frac{1}{2}+\tilde{\mu}\tan\left(\tilde{\mu}\log\left(\frac{\rho(x)}{1+\eps}\right)\right)\right)^2+\frac{\alpha\tilde{C}}{\gamma^2}\right).
\end{align*}
Using this, we have
\begin{align*}
     \frac{C}{r^2} +& \alpha h^2 -\gamma| \nabla h|\\
     &=\frac{C}{r^2}+\alpha h^2 - \frac{\gamma^2(1+\eps)|\nabla\rho(x)|}{\alpha\rho(x)^2}\left(\left(-\frac{1}{2}+\tilde{\mu}\tan\left(\tilde{\mu}\log\left(\frac{\rho(x)}{1+\eps}\right)\right)\right)^2+\frac{\alpha\tilde{C}}{\gamma^2}\right) \\
     &\geq \frac{C}{r^2}+\alpha h^2 - \frac{\gamma^2(1+\eps)^2}{\alpha\rho(x)^2}\left(-\frac{1}{2}+\tilde{\mu}\tan\left(\tilde{\mu}\log\left(\frac{\rho(x)}{1+\eps}\right)\right)\right)^2-\frac{\tilde{C}(1+\eps)^2}{\rho(x)^2} \\
     &\geq \frac{C}{r(x)^2}-\frac{\tilde{C}(1+\eps)^2}{(r(x)-\eps)^2} \\
     &>0,
\end{align*}
where the last inequality follows from \eqref{eq: Cestimate}. Therefore by \eqref{e: spectral ricci 2}, we see that every connected component $\Sigma'$ of $\Sigma_1$ has positive spectral Ricci curvature. As $\Sigma'$ is a closed manifold and $\gamma=\frac{n-1}{n-2}$, by \cite[Theorem 1]{AntonelliXuNewSpectralBishopGromov}, we have that $\pi_1(\Sigma')$ is finite.

Lastly, assume that $K_1\subset\cdots  \subset K_{i-1}$ have been chosen as in the statement of the proposition. Let $k_i=k_1+2(i-1)$. Then we can find $X_i$, $\Omega_i$, and $\Sigma_i$ in the same manner as $X_1$, $\Omega_1$, and $\Sigma_1$. Let $K_i=\{x\in X:\rho(x) < (1+\eps)\exp\left(\tilde{\mu}^{-1}\left(k_i\pi-\frac{\pi}{2}\right)\right)\} \cup \Omega_i$. The same computation shows that every connected component of $\Sigma_i$ has finite fundamental group. \end{proof}

\begin{remark}
    Here we provide some details about the choice of $C_6$ and $C_7$. Within the proof, we note that $\gamma$ needs to be chosen strictly larger than $\frac{\sqrt{n-1}}{2}$ so that the coefficient of $h^2$ is strictly positive (see \eqref{e: equation}). In order to apply \cite[Theorem 1]{AntonelliXuNewSpectralBishopGromov}, we must choose $\gamma\le \frac{n-1}{n-2}$. Now, note for $\gamma\in\left(\frac{\sqrt{n-1}}{2},\frac{n-1}{n-2}\right]$ and $\alpha:=\alpha(\gamma)=\frac{4\gamma^2-(n-1)}{4(n\gamma-(n-1))}$, we have that  $\gamma\mapsto \frac{\gamma^2}{4\alpha(\gamma)}$ is a decreasing function. Therefore, the smallest value of $\frac{\gamma^2}{4\alpha(\gamma)}$ we can choose is when $\gamma=\frac{n-1}{n-2}$, which corresponds to $C_{6}=\frac{32}{21}$ and $C_7=\frac{25}{8}$.
\end{remark}

 We can apply \Cref{separationthm} to show the following corollary.

\begin{cor}\label{BiRicFiniteFundGroup}
    Let $X^{6}$ be a compact, contractible $6$-manifold with boundary such that the interior of $X$ supports a complete Riemannian metric with $\BiRic\geq \kappa>0$ and $\Ric\geq-K$ for some $K\geq0$. Then $\pi_1(\partial X)$ is finite.
\end{cor}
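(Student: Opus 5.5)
We will apply \Cref{separationthm} far out in the end of $M=\mathrm{int}(X)$ and then transfer finiteness of the fundamental group from the separating hypersurface to $\partial X$ via a degree-one map and \Cref{fintepi1}.

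\textbf{Setup.} Let $M=\mathrm{int}(X)$ with the given complete metric $g$. A collar of $\partial X$ gives a diffeomorphism $M\setminus \mathring{K}\cong \partial X\times[0,\infty)$, where $K\subset M$ is compact. By \Cref{homologysphere}, $\partial X$ is an integral homology $5$-sphere, so it is connected, closed and orientable. Fix the constant $C=C(\kappa,K)$ from \Cref{separationthm}. By completeness, $\partial X\times\{t\}$ recedes to infinity, so we can choose $t_1<t_2$ with $d(\partial X\times\{t_1\},\partial X\times\{t_2\})>C$. Let $Y=\partial X\times[t_1,t_2]$ with the restricted metric. Then $Y$ is a compact $6$-manifold with two boundary components, $\partial_-Y=\partial X\times\{t_1\}$ and $\partial_+Y=\partial X\times\{t_2\}$, and it inherits $\BiRic\geq\kappa$ and $\Ric\geq-K$. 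Since $n=5$, \Cref{separationthm} applies to $Y$. It yields a smooth, closed, embedded, two-sided hypersurface $\Sigma\subset\mathrm{int}(Y)$ that separates $\partial_-Y$ from $\partial_+Y$, and every component of $\Sigma$ has finite $\pi_1$.

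\textbf{Extracting a degree-one map.} Let $p:Y\to\partial X$ be the projection. Since $\Sigma$ separates the two ends of the cylinder, the region $\Omega$ bounded by $\Sigma$ and $\partial_+Y$ satisfies $\partial\Omega=\Sigma\sqcup\partial_+Y$ (with orientations). Hence $[\Sigma]$ is homologous in $Y$ to $[\partial X\times\{t\}]$, and so $p_*[\Sigma]=[\partial X]$ in $H_5(\partial X;\bZ)\cong\bZ$.

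Write $\Sigma=\Sigma_1\sqcup\dots\sqcup\Sigma_m$, with each component oriented as part of $\partial\Omega$. Then
\[
\sum_j \deg\bigl(p|_{\Sigma_j}\bigr)=1,
\]
so some component has nonzero degree. We need a single component of degree exactly one.

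\textbf{Main obstacle.} The difficulty is that the sum could a priori be split as, say, $2+(-1)$, leaving no single component of degree one. To fix this, pass to a different separating hypersurface. Since $\pi_1(\Sigma_j)$ is finite for every $j$, the image $p_*\pi_1(\Sigma_j)$ is finite, and for any map $f:\Sigma_j\to\partial X$ of nonzero degree the image $f_*\pi_1(\Sigma_j)$ has finite index in $\pi_1(\partial X)$. This is the covering argument in the proof of \Cref{p: finitefundgroup}: in the noncompact-cover case the degree is zero, and otherwise $\deg f$ is divisible by the index.

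So pick a component $\Sigma_j$ with $\deg(p|_{\Sigma_j})\neq0$. Then $\pi_1(\partial X)$ contains a finite subgroup of finite index, and therefore $\pi_1(\partial X)$ is finite. This argument does not even require degree exactly one, since the proof of \Cref{p: finitefundgroup} applies verbatim to nonzero degree. Finiteness of $\pi_1(\partial X)$ follows, which proves \Cref{BiRicFiniteFundGroup}.
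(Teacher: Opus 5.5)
Your proof is correct and follows the paper's outline. Like the paper, you place a long collar $\partial X\times[t_1,t_2]$ far out in the end, apply \Cref{separationthm}, and observe that the projection restricted to the separating hypersurface $\Sigma$ has total degree $\pm1$.

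The real difference is the last step. The paper asserts directly that some component $\Sigma'$ admits a degree-one map to $\partial X$ and then quotes \Cref{fintepi1}. You only extract a component of nonzero degree. You then strengthen the covering argument of \Cref{p: finitefundgroup}: a nonzero-degree map has finite-index image on $\pi_1$, and together with $|\pi_1(\Sigma_j)|<\infty$ this gives finiteness of $\pi_1(\partial X)$. Your version is more robust, since it needs no information about how individual components sit in the collar. It also makes explicit a step the paper leaves unjustified.

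For the record, the paper's assertion is true in this setting, so the splitting $2+(-1)$ you worry about cannot occur. Since $H^1(Y;\bZ)\cong H^1(\partial X;\bZ)=0$, every connected closed two-sided hypersurface in $Y$ separates $Y$. Counting signed crossings of such a component with a fiber $\{x\}\times[t_1,t_2]$ shows its degree is $\pm1$ if it separates $\partial_-Y$ from $\partial_+Y$ and $0$ otherwise. Reversing orientation if necessary then yields a degree-one component.

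Two small presentational points. The sentence ``pass to a different separating hypersurface'' does not describe what you actually do and should be removed. You also use $K$ both for the compact core of $M$ and for the Ricci lower bound.
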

\begin{proof}
   By \Cref{homologysphere},  we have that $\partial X$ is an integral homology sphere; therefore, $\partial X$ is connected and orientable. By the collar neighborhood theorem, there is a closed set $U\Subset \mathrm{int}(X)$, diffeomorphic to $\partial X \times [-1,1]$. Moreover, by completeness of the metric, we may assume $d\left(\partial X \times \{-1\},\partial X\times\{1\}\right)>C(\kappa, K)$, where $C(\kappa, K)$ is the constant in the hypothesis of \Cref{separationthm}.  Therefore, from \Cref{separationthm}, there is a smooth embedded 2-sided hypersurface $\Sigma$ in $U$ separating $\partial X \times \{-1\}$ from $\partial X \times \{1\}$ and every connected component of $\Sigma$ has finite fundamental group.

   Consider now the projection map $\pi:U\to \partial X$ and its restriction $\pi|_{ \Sigma}:\Sigma\to \partial X$. If $\omega$ is the volume form on $\partial X$, then $\int_{\Sigma}(\pi|_{\Sigma})^*\omega = \pm 1 \int_{\partial X}\omega$. (One can see this since $\Sigma\subset U$ separates $\partial X \times \{-1\}$ from $\partial X \times \{1\}$, which implies there is a $6$-manifold $W\subseteq U$ with $\partial W= \Sigma \sqcup \partial X$ and applying Stokes' Theorem.) Therefore, $\deg \pi_\Sigma =\pm1$. And so there is a degree one map from a connected component $\Sigma'$ of $\Sigma$ to $\partial X$ which implies, by \Cref{fintepi1}, that $\pi_1(\partial X)$ is finite.
   \end{proof}

We can apply \Cref{exhaustionthm} to show the following corollary.

\begin{cor}\label{BiRicFiniteFundGroup2}
    Let $X^{n}$, $n\in\{6,7\}$ be a compact, contractible $n$-manifold with boundary such that the interior of $X$ supports a complete Riemannian metric with positive BiRicci curvature with at most $C$-quadratic decay for some $C>C_{n}$, where $C_6=\frac{32}{21}$ and $C_7=\frac{25}{8}$ and $\Ric\geq 0$. Then the boundary $\partial X$ has a finite fundamental group.
\end{cor}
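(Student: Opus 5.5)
The plan mirrors the proof of \Cref{BiRicFiniteFundGroup}, with \Cref{exhaustionthm} in place of \Cref{separationthm}. By \Cref{homologysphere}, $\partial X$ is an integral homology $(n-1)$-sphere, hence connected and orientable. In particular $M=\mathrm{int}(X)$ is orientable, since $X$ is contractible. Applying \Cref{exhaustionthm} to $M$ (with $n+1$ there equal to our $n\in\{6,7\}$) gives an exhaustion $K_1\subset K_2\subset\cdots$ of $M$ by compact smooth domains. Each boundary $\Sigma_i=\partial K_i$ is a closed orientable hypersurface, and every connected component of $\Sigma_i$ has finite fundamental group.

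Next we place some $\Sigma_i$ inside a collar. By the collar neighborhood theorem there is a closed set $U\subset \mathrm{int}(X)$ diffeomorphic to $\partial X\times[-1,1]$, whose complement in $M$ consists of a compact piece $X'$ (bounded by $\partial X\times\{-1\}$) and the end $\partial X\times(1,\text{``}\partial X\text{''})$. Since the $K_i$ exhaust $M$, we may take $i$ large so that $K_i\supset X'\cup U$ fails. So we proceed differently: first choose $i$ with $K_i\supset X'\cup(\partial X\times[-1,0])$. Then choose a thinner collar $\partial X\times[0,1]$ lying outside $K_i$; this is possible because $K_i$ is compact and so avoids a neighborhood of the end $\partial X\times[1-\delta,1)$ after reparametrizing the collar. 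After these adjustments, $\Sigma_i\subset U$ and $\Sigma_i$ separates $\partial X\times\{-1\}$ from $\partial X\times\{1\}$. The separation holds because $K_i$ contains the first slice and misses the second, so $\Sigma_i=\partial K_i$ separates them.

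Now let $\pi:U\to\partial X$ be the projection. Let $W=K_i\setminus X'$, oriented so that $\partial W=\Sigma_i\sqcup(-\partial X\times\{-1\})$, where $\partial X\times\{-1\}=\partial X'$. Stokes' theorem applied to $\pi^*\omega$, which is closed, gives
\[
\int_{\Sigma_i}\pi^*\omega=\int_{\partial X}\omega,
\]
so $\deg(\pi|_{\Sigma_i})=1$. Since $\Sigma_i$ may be disconnected, we use additivity: the degree of $\pi|_{\Sigma_i}$ is the sum of the degrees over the components $\Sigma'$ of $\Sigma_i$. Hence some component has nonzero degree $d$.

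The main obstacle is that \Cref{fintepi1} needs degree exactly one, while this component is only known to have nonzero degree $d$. To handle this, we rerun the argument of \Cref{p: finitefundgroup} with $d\neq0$ in place of $1$. Let $H=\pi_*\pi_1(\Sigma')$ and let $p:\hat N\to\partial X$ be the corresponding cover. If $\hat N$ were noncompact, the Stokes argument in \Cref{p: finitefundgroup} would force $d=0$, a contradiction. Therefore $H$ has finite index in $\pi_1(\partial X)$. Since $\pi_1(\Sigma')$ is finite, $H$ is finite, and so $\pi_1(\partial X)$ is finite.
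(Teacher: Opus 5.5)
Your proof is correct and follows the paper's route: exhaust $\mathrm{int}(X)$ via \Cref{exhaustionthm}, place some $\Sigma_i=\partial K_i$ inside a product collar so that it separates the two end slices, use Stokes to get $\deg(\pi|_{\Sigma_i})=\pm1$, and finish with the covering-space argument of \Cref{p: finitefundgroup}. You should delete the abandoned sentence about $K_i\supset X'\cup U$ ``failing''. The one real difference is that you only extract a component of nonzero degree and rerun \Cref{p: finitefundgroup} for degree $d\neq 0$. This is more careful than the paper, which asserts without justification that some component has degree one before invoking \Cref{fintepi1}. That assertion is in fact true here: since $H^1(\partial X;\bZ/2)=0$, each connected component of $\Sigma_i$ separates the collar and so has degree $0$ or $\pm1$. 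Your version simply does not need it.
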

\begin{proof}
   By \Cref{homologysphere},  we have that $\partial X$ is an integral homology sphere; therefore, $\partial X$ is connected and orientable. Let $V\subset \mathrm{int}(X)$ be an open subset such that 
   \[
   \mathrm{int}(X) \setminus V \cong \partial X\times [0,\infty)
   \]
   Therefore, from \Cref{exhaustionthm}, we can choose a sufficiently large $i$ so that $\overline{V}\Subset{K_i}$, and so $\Sigma=\partial K_i\subset \partial X \times [0,\infty)$ is a smooth embedded 2-sided hypersurface $\Sigma\subset\partial X \times [a,b]\subset \partial X \times [0,\infty)$ separating $\partial X \times \{a\}$ from $\partial X \times \{b\}$. Also, by \Cref{exhaustionthm}, every connected component of $\Sigma$ has a finite fundamental group.

   Consider now the projection map $\pi:\partial X \times [a,b]\to \partial X$ and its restriction $\pi|_{ \Sigma}:\Sigma\to \partial X$. If $\omega$ is the volume form on $\partial X$, then $\int_{\Sigma}(\pi|_{\Sigma})^*\omega = \pm 1 \int_{\partial X}\omega$. (One can see this since $\Sigma$ separates $\partial X \times \{a\}$ from $\partial X \times \{b\}$, which implies there is an $n$-manifold $W\subseteq \partial X\times [a,b]$ with $\partial W= \Sigma \sqcup \partial X$ and applying Stokes' Theorem.) Therefore, $\deg \pi_\Sigma =\pm1$. And so there is a degree one map from a connected component $\Sigma'$ of $\Sigma$ to $\partial X$, which implies by \Cref{fintepi1} that $\pi_1(\partial X)$ is finite.
   \end{proof}

%

% \begin{cor}\label{BiRicFiniteFundGroup}
%     Let $X^{n+1}$  be a compact, contractible $n$-manifold with boundary such that the interior of $X$ supports a complete Riemannian metric with positive BiRicci curvature with at most $C$-quadratic decay for some $C>\frac{n^2}{4}$.  Then the boundary $\partial X$ has a finite fundamental group.
% \end{cor}
% \begin{proof}
%    By \Cref{homologysphere},  we have that $\partial X$ is an integral homology sphere; therefore, $\partial X$ is connected and orientable. By the collar neighborhood theorem, there is a closed set $U$ that contains the boundary, diffeomorphic to $\partial X \times [-1,1]$. Therefore, from \Cref{exhaustionthm}, there is a smooth embedded 2-sided hypersurface $\Sigma$ in $U$ separating $\partial X \times \{-1\}$ from $\partial X \times \{1\}$ and $\Sigma$ has finite fundamental group.

%    Consider now the projection map $\pi:U\to \partial X$ and its restriction $\pi|_{ \Sigma}:\Sigma\to \partial X$. If $\omega$ is the volume form on $\partial X$, then $\int_{\Sigma}(\pi|_{\Sigma})^*\omega = \pm 1 \int_{\partial X}\omega$. (One can see this since $\Sigma\subset U$ separates $\partial X \times \{-1\}$ from $\partial X \times \{1\}$, which implies there is an $(n+1)$-manifold $W\subseteq U$ with $\partial W= \Sigma \sqcup \partial X$ and applying Stokes' Theorem.) Therefore, there is a degree one map from $\Sigma$ to $\partial X$ which implies, by \Cref{fintepi1}, that $\pi_1(\partial X)$ is finite.
%    \end{proof}

\section{Proof of the main theorems}\label{proofs}
In this section, we combine the above algebraic topological facts and $\mu$-bubble results to prove the main results. %For ease of reading, we will prove \Cref{t: main} and \Cref{t: main2} separately, even though their proofs are similar. 

\begin{proof}[Proof of \Cref{t: main}]
    By assumption and \Cref{boundaryconnectedness}, we have $\pi_2(\partial X)=0$, by \Cref{homologysphere}, we know that $\partial X$ is a homology sphere, and by \Cref{BiRicFiniteFundGroup}, we know that $\pi_1(\partial X)$ is finite. As $\partial X$ is a closed, connected manifold and $\pi_1(\partial X)$ is finite, we know that the universal cover $\widetilde{\partial X}$ is also an orientable, closed, connected manifold and so $H_0(\widetilde{\partial X};\bZ)=\bZ$ and $H_5(\widetilde{\partial X};\bZ)=\bZ$. Note, $\pi_1(\widetilde{\partial X})=0$ and $\pi_2(\widetilde{\partial X})=0$ and so by the Hurewicz Theorem we know $H_1(\widetilde{\partial X};\bZ)=0$ and  $H_2(\widetilde{\partial X};\bZ)=0$ and $\pi_3(\widetilde{\partial X})\cong H_3(\widetilde{\partial X};\bZ)$. Now by the universal coefficient theorem and Poincar\'e duality, we have that $ 0=\mathrm{Hom}(H_1(\widetilde{\partial X};\bZ),\bZ)\cong H^1(\widetilde{\partial X};\bZ) \cong H_4(\widetilde{\partial X};\bZ)$. Also, by Poincar\'e duality and the universal coefficient theorem, $H_3(\widetilde{\partial X};\bZ)\cong H^2(\widetilde{\partial X},\bZ)\cong\mathrm{Hom}(H_2(\widetilde{\partial X};\bZ),\bZ)\oplus\mathrm{Ext}(H_1(\widetilde{\partial X};\bZ),\bZ)=0$. 
    
    Therefore, $\widetilde{\partial X}$ is a simply connected homology sphere and so by \cite[\S 9 Proposition B]{MilnorLectures} (see also Smale \cite{SmaleGeneralized, SmaleN}), we have that $\widetilde{\partial X}$ is diffeomorphic to $\bS^5$. By \Cref{coversofhomologyspheres}, $\partial X$ is a simply connected homology sphere. Finally, by  \cite[\S 9 Proposition A]{MilnorLectures}, $X$ is diffeomorphic to the $6$-disk and so $M$ is diffeomorphic to $\bR^6$ by Stallings \cite{StallingsThepiecewise}.
\end{proof}

Now, we prove the case $n=7$ in \Cref{t: main2}.
\begin{proof}[Proof of \Cref{t: main2} when n=7]
    By assumption and \Cref{boundaryconnectedness}, we have $\pi_2(\partial X)=0$. By \Cref{homologysphere}, we know that $\partial X$ is a homology sphere, and by \Cref{BiRicFiniteFundGroup2}, we know that $|\pi_1(\partial X)|=d$ is finite. As $\partial X$ is a closed, connected manifold and $\pi_1(\partial X)$ is finite, we know that the universal cover $\widetilde{\partial X}$ is also an orientable, closed, connected manifold and so $H_0(\widetilde{\partial X};\bZ)=\bZ$ and $H_6(\widetilde{\partial X};\bZ)=\bZ$. Note, $\pi_1(\widetilde{\partial X})=0$ and $\pi_2(\widetilde{\partial X})=0$ so by the Hurewicz Theorem we know $H_1(\widetilde{\partial X};\bZ)=0$ and $H_2(\widetilde{\partial X};\bZ)=0$. Now, by Poincar\'e duality and the universal coefficient theorem, we have that $H_4(\widetilde{\partial X};\bZ)\cong H^2(\widetilde{\partial X},\bZ)\cong\mathrm{Hom}(H_2(\widetilde{\partial X};\bZ),\bZ)\oplus\mathrm{Ext}(H_1(\widetilde{\partial X};\bZ),\bZ)=0$. Again, by Poincar\'e duality and the universal coefficient theorem, we have that $H_5(\widetilde{\partial X};\bZ)\cong H^1(\widetilde{\partial X},\bZ)\cong \mathrm{Hom}(H_1(\widetilde{\partial X};\bZ),\bZ)\oplus\mathrm{Ext}(H_0(\widetilde{\partial X};\bZ),\bZ)=0$. Now, we compute the Euler characteristic
    \[
    2d=\chi(\partial X)\cdot d = \chi(\widetilde{\partial X}) = \sum_{j=0}^6 (-1)^j\mathrm{rank}H_j(\widetilde{\partial X})=2-\mathrm{rank}H_3(\widetilde{\partial X}).
    \]
    Rearranging this, we see
    \[
    0\leq \mathrm{rank}H_3(\widetilde{\partial X}) = 2(1-d).
    \]
    As $d$ is an integer greater than or equal to one, we conclude $d=1$.
    
    Therefore, $\pi_1(\partial X)$ has one element. Thus, $\partial X$ is a simply connected homology sphere and so by \cite[\S 9 Proposition B]{MilnorLectures} (see also Smale \cite{SmaleGeneralized, SmaleN}), we have that ${\partial X}$ is diffeomorphic to $\bS^6$. By  \cite[\S 9 Proposition A]{MilnorLectures}, $X$ is diffeomorphic to the $7$-disk and so $M$ is diffeomorphic to $\bR^7$ by Stallings \cite{StallingsThepiecewise}.
\end{proof}

\begin{remark}
    The proof of \Cref{t: main2} when $n=6$ follows from an identical argument as the proof of \Cref{t: main} but with \Cref{BiRicFiniteFundGroup} replaced with \Cref{BiRicFiniteFundGroup2}.
\end{remark}

\section{Construction of sufficiently connected contractible manifolds}\label{s: examples proof}

Before we construct the example, we start with some background and propositions. In \cite{HigmanAFinitelyGenerated}, Higman constructed a nontrivial group $H$ with many interesting properties. $H$ has the following presentation
\begin{equation}\label{eq: presentation}
    \langle w,x,y,z| wxw^{-1}x^{-2}, xyx^{-1}y^{-2}, yzy^{-1}z^{-2},zwz^{-1}w^{-2}\rangle.
\end{equation}

It was then observed by Dyer and Vasquez \cite[\S 4]{DyerVasquez} that the canonical two-dimensional cell-complex $K$ of this presentation also has interesting topological properties. (See also \cite{KornevHomologySpheres} for a survey about homology spheres and further discussion of $H$.)

\begin{prop}[{\cite[\S 4]{DyerVasquez}}]\label{p: Higman}
    The canonical two-dimensional cell-complex $K$ associated to the presentation \eqref{eq: presentation} of $H$ is made of one vertex, four 1-cells, and four 2-cells and enjoys the following properties:
    \begin{enumerate}
        \item $\pi_1(K)=H$,
        \item $\pi_i(K)=0$, $i\geq 2$,
        \item $H_i(K;\bZ)=0$, $i\geq 1$,
        \item $H^i(K;\bZ)=0$, $i\geq 1$.
    \end{enumerate}
\end{prop}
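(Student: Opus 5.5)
The plan is to treat the four items of \Cref{p: Higman} in order. The cell structure is read off directly from the presentation \eqref{eq: presentation}: one $0$-cell, one $1$-cell for each generator $w,x,y,z$, and one $2$-cell for each relator, attached along the corresponding word. Item (1) is then the standard fact that the fundamental group of the presentation complex of a presentation is the group it presents (van Kampen applied to attaching the $2$-cells to a wedge of four circles).

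For items (3) and (4) we compute the cellular chain complex. We have $C_2=\bZ^4\to C_1=\bZ^4\to C_0=\bZ$. Since there is one vertex, $\partial_1=0$. The boundary $\partial_2$ sends each relator to its abelianization. For $wxw^{-1}x^{-2}$ this is $-x$; similarly $xyx^{-1}y^{-2}\mapsto -y$, $yzy^{-1}z^{-2}\mapsto -z$, and $zwz^{-1}w^{-2}\mapsto -w$. Thus $\partial_2$ is, up to a permutation, $-\mathrm{Id}$, an isomorphism $\bZ^4\to\bZ^4$. Hence $H_1=\bZ^4/\mathrm{im}\,\partial_2=0$ and $H_2=\ker\partial_2=0$, and $K$ has no higher cells. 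Dualizing, the cellular cochain map $\delta^1=\partial_2^T$ is also invertible, so $H^1=H^2=0$; alternatively this follows from the universal coefficient theorem since $H_*(K)$ is free and trivial in positive degrees.

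Item (2) is the main obstacle, namely asphericity of $K$. Since $K$ is $2$-dimensional, it suffices to show $\pi_2(K)=0$: the universal cover $\widetilde K$ is then a simply connected $2$-complex with $H_2(\widetilde K)=\pi_2(K)=0$ by Hurewicz and no higher homology, so it is contractible by Whitehead. To get $\pi_2(K)=0$, I would use Higman's decomposition of $H$ as an amalgam. The Baumslag--Solitar complexes $K_{ab}$ with presentations $\langle a,b\mid aba^{-1}b^{-2}\rangle$ are aspherical, being one-relator complexes whose relator is not a proper power (Lyndon). Gluing two of them along a circle gives the complex of $\langle w,x,y\mid wxw^{-1}x^{-2},\,xyx^{-1}y^{-2}\rangle$; gluing the remaining two gives the complex on $y,z,w$. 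Each of these is aspherical by Whitehead's theorem on gluing aspherical complexes along aspherical subcomplexes with $\pi_1$-injective inclusions, which here uses that $\langle x\rangle$ is infinite cyclic and injects into the relevant Baumslag--Solitar groups. Finally, $K$ is obtained by gluing these two pieces along the wedge of two circles on $y$ and $w$. Asphericity of $K$ then follows by the same gluing theorem, provided $\langle y,w\rangle$ is free of rank two and injects into both pieces. This injectivity is exactly Higman's key lemma, which I would cite, checking it via normal forms in the amalgamated products. The delicate point is verifying these $\pi_1$-injectivity statements, and for that I would rely on Higman's normal-form arguments.
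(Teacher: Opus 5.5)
The paper does not prove this proposition; it is quoted from Dyer--Vasquez, so there is no internal argument to compare against. Your proposal is a correct, self-contained proof along the standard lines.

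For (3) and (4), the cellular computation is right: $\partial_2$ is a signed permutation matrix, hence invertible over $\bZ$, and $\partial_1=0$. For (2), Whitehead's gluing theorem is the right tool, applied first to $K_{wx}\cup_x K_{xy}$ and $K_{yz}\cup_z K_{zw}$, and then to their union along the wedge $S^1_y\vee S^1_w$. This uses the asphericity of the Baumslag--Solitar one-relator complexes, which are mapping tori of the degree-two self-map of the circle.

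The injectivity step you flag as delicate is actually short. Use the exponent-sum homomorphism to $\bZ$ in the conjugating letter:
\begin{itemize}
\item In $\langle w,x\mid wxw^{-1}x^{-2}\rangle$, it shows $w^m\notin\langle x\rangle$ for $m\neq 0$.
\item In $\langle x,y\mid xyx^{-1}y^{-2}\rangle$, it shows $y^n\notin\langle x\rangle$ for $n\neq 0$, using that $y$ has infinite order.
\end{itemize}
So by the normal form theorem for the amalgam over $\langle x\rangle$, every nontrivial reduced word in $w,y$ is nontrivial. The same argument in the amalgam over $\langle z\rangle$ gives injectivity of $\langle y,w\rangle$ into the other piece.

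Your reduction of (2) to $\pi_2(K)=0$ is harmless but redundant. Whitehead's gluing theorem already gives a contractible universal cover, so $\pi_i(K)=0$ for all $i\ge 2$ follows directly.

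Compared with the paper's bare citation, your route costs a paragraph but makes visible exactly where Higman's freeness lemma enters the asphericity.
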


Next we will describe a construction of Newman \cite{NewmanBoundaries} of tame contractible manifolds (see also \cite[Example 0.10.3]{DavermanVenemaEmbeddings} and \cite[\S 2 and \S 3]{GuilbaultSomeCompactContractible}) applied to our two-complex $K$. The next proposition will prove \Cref{t: example}.

\begin{prop}
    For each $n\geq 6$, there exists a compact, contractible manifold with boundary $N^n$ such that $\pi_1(\partial N)=H$, $\pi_i(N,\partial N)=0$, $3\leq i\leq n-3$. Moreover, $\mathrm{int}(N)$ admits a complete metric with uniform positive scalar curvature and Ricci curvature bounded below.
\end{prop}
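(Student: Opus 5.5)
Following Newman's construction, I take a piecewise-linear (or smooth) embedding of the two-complex $K$ from \Cref{p: Higman} into $\bR^n$, $n\geq 6$, which exists since $2\cdot 2+1\leq n$. Let $N^n$ be a regular neighborhood of $K$. Since $N$ deformation retracts onto $K$, it is a compact $n$-manifold with boundary and $\pi_1(N)=H$. This is not yet contractible, so I follow Newman and Guilbault: instead of the neighborhood of $K$ itself, I use a regular neighborhood of $K$ thickened so that $N$ is homotopy equivalent to a point while $\partial N$ still carries $H$.

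Concretely, I take a thickening $Q$ of $K$ (so $\pi_1(Q)=H$, and $Q$ is acyclic by \Cref{p: Higman}(3)). I then let $N$ be the manifold obtained from $Q\times[0,1]$, or from the double along a collar, by attaching $2$-handles that kill $H$ along the $1$-handles, with matching $3$-handles so that the homology stays trivial. Alternatively, I use Newman's trick of taking $N=D^n\setminus \mathrm{int}(\text{regular neighborhood of }K)$ after embedding $K$ in $\partial$-position. The cleanest route is standard: $\partial(K\times D^{n-2})$-type thickening $Q^n$ of $K$ in $\bR^n$ has $\partial Q$ with $\pi_1(\partial Q)\cong\pi_1(Q)=H$ by general position, because $K$ has codimension $\geq 3$ in $Q$ when $n\geq5$. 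Since $Q$ is acyclic with $\pi_i(Q)=\pi_i(K)$, it is not contractible. Newman's fix is to take $N=Q\cup(\text{2- and 3-handles})$ that kill $H$ in the interior, so that $\pi_1(N)=1$ and $N$ is acyclic, hence contractible by Hurewicz and Whitehead. The handles are attached away from a codimension-$3$ spine of $\partial Q$, so $\pi_1(\partial N)=H$ persists: the new handles have codimension $\geq 3$ relative to the boundary in dimension $n\geq 6$.

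For the relative homotopy groups I use \Cref{boundaryconnectedness}, $\pi_{i}(N,\partial N)\cong\pi_{i-1}(\partial N)$, so I need $\pi_j(\partial N)=0$ for $2\leq j\leq n-4$. Here $\partial N\setminus(\text{spine})$ is obtained from $N$ minus a codimension-$(n-2)$ complex, so by general position $\pi_j(\partial N)\cong\pi_j(N\setminus K)\to\pi_j(N)$ is an isomorphism for $j\leq n-4$ on universal covers. More carefully, I pass to the universal cover of $\partial N$, which deformation retracts through dimensions $\leq n-4$ onto the universal cover of a collar complement of $K$. This space is acyclic there by duality, since $K$ is aspherical with trivial (co)homology, and Hurewicz gives vanishing up to $n-4$. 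Also $\pi_2(N,\partial N)\cong\pi_1(\partial N)=H\neq0$, which gives non-diffeomorphism with $\bR^n$.

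For the metric, $\mathrm{int}(N)$ retracts onto the $2$-complex $K$, and its complement is $\partial N\times[0,\infty)$. I realize $\mathrm{int}(N)$ as the complement of a codimension-$\geq3$ complex, or equivalently as a handlebody with handles of index $\leq 2$ built from the end, of codimension $\geq 4$ when $n\geq6$ from the cylindrical end. I equip $\partial N\times[0,\infty)$ with the product metric $g_{\partial N}+dt^2$, where $g_{\partial N}$ has positive scalar curvature via Gromov--Lawson surgery: $\partial N$ is built from $S^{n-1}$ by surgeries of codimension $\geq3$. I then extend over the compact part using Gromov--Lawson/Schoen--Yau handle attachment of codimension $\geq3$, choosing all the data so that the scalar curvature is bounded below by a positive constant. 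The Ricci lower bound is automatic, since outside a compact set the metric is a product with a compact factor, and so has bounded geometry. The main obstacle is this last step: making the psc metric on the noncompact interior uniform. I would handle it by building $\mathrm{int}(N)$ as $\partial N\times\bR$ glued along one end to a compact psc piece obtained from a handle decomposition of $N$ relative to $\partial N$ with handles of index $\geq n-2\geq 3$ from that side, i.e. codimension $\leq 2$ is avoided, and applying Gromov--Lawson to get psc with a product collar.
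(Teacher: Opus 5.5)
\textbf{There is a genuine gap: your main construction, its homotopy computation, and the metric step all break.}

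\textbf{The construction.} Your main construction cannot produce the required manifold. Suppose $N$ is obtained from the thickening $Q$ by attaching $2$- and $3$-handles. Each $2$-handle is attached along a circle $c\subset\partial Q$ and changes the boundary by surgery on $c$, which has codimension $n-2\ge 4$. That surgery kills the normal closure of $[c]$ in $\pi_1$ of the boundary. The $3$-handles are surgeries on $2$-spheres of codimension $n-3\ge 3$, so they leave $\pi_1$ of the boundary unchanged. Hence $\pi_1(\partial N)\cong\pi_1(N)=1$. Equivalently, $N$ is built from $\emptyset$ by handles of index $\le 3\le n-3$, so $(N,\partial N)$ is $2$-connected. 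There is no way to attach handles ``in the interior'', and your contractible $N$ is simply $D^n$.

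This is why Newman's construction, which the paper uses, takes a complement instead. Embed $K$ in $\bS^n$, let $\overline U$ be a smooth regular neighborhood, and set $N=\bS^n\setminus\mathrm{int}(\overline U)$.
\begin{itemize}
\item $\pi_1(N)=0$ by general position, and $\widetilde H_*(N)=0$ by Alexander duality since $K$ is acyclic. So $N$ is contractible.
\item $\partial N=\partial\overline U\hookrightarrow\overline U\simeq K$ is $(n-3)$-connected because $K$ has codimension $n-2$. This gives $\pi_1(\partial N)\cong H$.
\item Because $K$ is aspherical, $\pi_j(\partial N)=0$ for $2\le j\le n-4$, whence $\pi_i(N,\partial N)=0$ for $3\le i\le n-3$.
\end{itemize}
You mention a complement only as an aside and never use it. Your homotopy computation also uses the wrong map: an isomorphism $\pi_j(\partial N)\to\pi_j(N)$ for $j\le n-4$ would force $\pi_1(\partial N)=1$. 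The correct map is $\partial\overline U\to\overline U$.

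\textbf{The metric.} The metric step conflates $N$ with $\overline U$ in the same way.
\begin{itemize}
\item $\mathrm{int}(N)$ does not retract onto $K$: $N$ is contractible, and it is $\overline U$ that retracts onto $K$.
\item $N$ has no handle decomposition relative to $\partial N$ with all handles of index $\ge n-2$. Such a decomposition would make $(N,\partial N)$ $(n-3)$-connected, contradicting $\pi_2(N,\partial N)\cong\pi_1(\partial N)=H\neq 0$.
\item For the same reason, any decomposition of $N$ built up from $\emptyset$ needs handles of index $\ge n-2$, i.e.\ surgeries of codimension $\le 2$. So Gromov--Lawson handle attachment cannot produce the metric on $N$, and a psc metric on $\partial N$ alone does not extend over $N$.
\end{itemize}
The mechanism that works goes the other way. $\overline U$ is built from $\partial\overline U$ by handles of index $n-2,n-1,n\ge 4$; this is what the $2$-connectivity of $\partial\overline U\hookrightarrow\overline U$ encodes in the paper's argument. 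So $N$ arises from the round $\bS^n$ by excising tubular neighborhoods of the cocores, which have codimension $\ge n-2\ge 4$. The Gromov--Lawson construction, in the form of the Frenck/Ebert--Randal-Williams results the paper cites, does this while keeping psc and making the metric a product near $\partial N$.

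Once that is in place, uniformity is not an obstacle. Attaching $\partial N\times[0,\infty)$ immediately gives a complete metric with scalar curvature bounded below by a positive constant and bounded curvature, hence Ricci curvature bounded below.
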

\begin{proof}
    This is essentially the proof from \cite[Example 0.10.3]{DavermanVenemaEmbeddings}. 

    Embed the canonical two-dimensional cell-complex $K$ associated to the presentation \eqref{eq: presentation} of $H$ into $\bS^n$, $n\geq 6$. By \cite[Theorem 1]{HirschSmoothRegular}, we can find a smooth closed regular neighborhood $\overline{U}$ of $K$ and let $N:=\bS^n\setminus \mathrm{int}(\overline{U})$. 
    
    Next, we will show that $N$ has the desired topological properties. To do this we will first show that $\pi_1(N)=0$ and $H_i(N)=0$, $i\geq 1$.
    
    Note that $N$ is a deformation retract of $S^n\setminus K$ and $K$ is a deformation retract of $\overline{U}$; thus, $S^n\setminus K$ is homotopy equivalent to $N$ and $K$ is homotopy equivalent to $\overline{U}$. By general position considerations, $0=\pi_1(S^n\setminus K)\cong\pi_1(N)$ and note this implies $H_1(N)=0$. Next, recall $H^i(K;\bZ)=0$, $i\geq 1$, and so $0=H^i(K;\bZ)\cong H^i(\overline{U};\bZ)$, $i\geq 1$. We note from the long exact sequence for homology pairs that $H_1(N,\partial N;\bZ)\cong\widetilde{H}_0(\partial N;\bZ) = 0$, where the last equality follows since $\partial N$ is connected. By Alexander duality, $0=H^{n-i-1}(\overline{U})\cong H_i(N)$, for $1\leq i \leq n-2$. Poincar\'e--Lefschetz duality and the universal coefficient theorem give $H_{n-1}(N;\bZ)\cong H^1(N,\partial N;\bZ)\cong \mathrm{Hom}(H_1(N,\partial N;\bZ),\bZ) =0$, where the last equality holds since $H_1(N,\partial N;\bZ)=0$. Finally, as $N$ is a compact $n$-manifold with boundary, we have $H_i(N)=0$, $i\geq n$. In summary, $N$ satisfies $\pi_1(N)=0$ and $H_i(N)=0$, $i\geq 1$ and so by the Hurewicz theorem and the Whitehead theorem $N$ is contractible.
    
    Next, by \cite[Corollary 2.5]{GuilbaultSomeCompactContractible}, $\pi_i( \overline{U})\cong\pi_i(\partial\overline{U}) \cong \pi_i(\partial N)$ for $1\leq i \leq n-4$ and $\pi_i( \overline{U})\cong \pi_i(K)$. By \Cref{p: Higman} and \Cref{boundaryconnectedness}, this completes the existence of the manifold $N$ with the desired topological properties.

    Now by \cite[Lemma 2.4 and Corollary 2.5]{GuilbaultSomeCompactContractible}, $\partial \overline{U}\hookrightarrow \overline{U}$ is 2-connected; therefore, by \cite[Theorem 2.7 and Remark 2.8]{FrenckSpacesofPSC} (see also \cite[Theorem E]{EbertRandallWilliamsInfiniteLoopSpaces}), $N=\bS^n\setminus \mathrm{int}(\overline{U})$ admits a positive scalar curvature metric with product boundary. Attaching an infinite cylindrical end to $N$ produces a complete metric on $\mathrm{int}(N)$ with uniform positive scalar curvature and Ricci curvature bounded below.
\end{proof}

\printbibliography
\end{document}